\DeclareMathOperator{\ucp}{UCP}
\DeclareMathOperator{\rk}{rank}
\DeclareMathOperator{\toep}{Toep}
\DeclareMathOperator{\gauge}{\mathcal{G}}
\DeclareMathOperator{\pert}{Pert}
\DeclareMathOperator{\hu}{UCBH}
\DeclareMathOperator{\id}{Id}
\DeclareMathOperator{\vct}{vec}
\DeclareMathOperator{\Tr}{Tr}
\DeclareMathOperator{\cb}{CB}
\DeclareMathOperator{\image}{Image}
\DeclareMathOperator{\alg}{alg}
\DeclareMathOperator{\spn}{span}
\numberwithin{equation}{section}
\newtheorem{Theorem}{Theorem}[section]
\newtheorem*{Theorem*}{Theorem}
\newtheorem{Corollary}[Theorem]{Corollary}
\newtheorem{Lemma}[Theorem]{Lemma}
\newtheorem{Proposition}[Theorem]{Proposition}
 { \theoremstyle{definition}
\newtheorem{Definition}[Theorem]{Definition}

\newtheorem{Example}[Theorem]{Example}
\newtheorem{Remark}[Theorem]{Remark} }
\begin{document}
\allowdisplaybreaks

\newcommand{\arXivNumber}{2111.13076}

\renewcommand{\PaperNumber}{060}

\FirstPageHeading

\ShortArticleName{The Gauge Group and Perturbation Semigroup of an Operator System}

\ArticleName{The Gauge Group and Perturbation Semigroup\\ of an Operator System}

\Author{Rui DONG}

\AuthorNameForHeading{R.~Dong}

\Address{Institute for Mathematics, Astrophysics and Particle Physics, Radboud University Nijmegen,\\ Heyendaalseweg 135, 6525 AJ Nijmegen, The Netherlands}
\Email{\href{mailto:rui.dong@math.ru.nl}{rui.dong@math.ru.nl}}

\ArticleDates{Received December 01, 2021, in final form July 28, 2022; Published online August 09, 2022}

\Abstract{The perturbation semigroup was first defined in the case of $*$-algebras by Chamseddine, Connes and van Suijlekom. In this paper, we take $\mathcal{E}$ as a concrete operator system with unit. We first give a definition of gauge group $\mathcal{G}(\mathcal{E})$ of $\mathcal{E}$, after that we give the definition of perturbation semigroup of $\mathcal{E}$, and the closed perturbation semigroup of $\mathcal{E}$ with respect to the Haagerup tensor norm. We also show that there is a continuous semigroup homomorphism from the closed perturbation semigroup to the collection of unital completely bounded Hermitian maps over $\mathcal{E}$. Finally we compute the gauge group and perturbation semigroup of the Toeplitz system as an example.}

\Keywords{operator algebras; operator systems; functional analysis; noncommutative geo\-metry}

\Classification{46L07; 47L25; 58B34; 11M55}

\section{Introduction}

An operator system $\mathcal{E}$ is a matrix-normed vector space equipped with a conjugate linear map $x\mapsto x^*$ on $\mathcal{E}$ such that $(x^*)^*=x$ for all $x\in \mathcal{E}$.
Although there is not a well-defined product of elements in $\mathcal{E}$,
we can embed the operator system $\mathcal{E}$ into some $C^*$-algebra $\mathcal{A}$,
and then take the gauge group of $\mathcal{E}$ as the collection of unitary elements of $\mathcal{A}$ that keep $\mathcal{E}$ invariant under the unitary transformation,
i.e.,
\[
\mathcal{G}(\mathcal{E}):=\{u\in \mathcal{A}\colon u^*\mathcal{E}u=\mathcal{E}\}.
\]
There are several different approaches to embed $\mathcal{E}$ into a $C^*$-algebra,
for instance,
we can embed~$\mathcal{E}$ into the $C^*$-envelope $C^*_{\rm en}(\mathcal{E})$,
the injective envelope $C^*_{\rm in}(\mathcal{E})$,
or simply the $C^*$-algebra $C^*(\mathcal{E})$ generated by $\mathcal{E}$ when $\mathcal{E}$ is a concrete operator system.
In this paper, we take $\mathcal{E}$ to be a concrete closed operator system with unit,
i.e.,
a closed linear subspace of bounded operators on some Hilbert space $\mathcal{H}$ with $\id \in \mathcal{E}\subset B(\mathcal{H})$,
and we embed $\mathcal{E}$ into $C^*(\mathcal{E})$.
In Section \ref{sec_gauge}, we show that there is a group homomorphism from $\mathcal{G}(\mathcal{E})$ to the set of unital completely positive maps on $\mathcal{E}$.
In Section \ref{sec_gauge_toep}, we show that the gauge group $\mathcal{G}(\toep_n)$ of Toeplitz system $\toep_n$ is independent of $n$,
and
\[
\mathcal{G}(\toep_n)\cong U(1)\times(U(1)\rtimes \mathbb{Z}_2).
\]
Inspired by the definition of perturbation semigroup of $*$-algebras given in \cite{MR3090113},
 the perturbation semigroup of matrix algebras \cite{MR3500821}
and $C^*$-algebras \cite{laura},
in Section \ref{sec_pert}, we give the definition of the perturbation semigroup $\pert(\mathcal{E})$ of an operator system $\mathcal{E}$.
More than that,
since the perturbation semigroup $\pert(\mathcal{E})$ is a subset of $\mathcal{A}\otimes \mathcal{A}^\circ$,
we can take the closure of $\pert(\mathcal{E})$ with respect to the Haagerup tensor norm,
and we can show that there is a continuous semigroup homomorphism from this closure of $\pert(\mathcal{E})$ to the collection of unital completely bounded Hermitian maps on~$\mathcal{E}$.

In Section \ref{sec_pert_toep}, we discuss the perturbation semigroups $\pert(\toep_n)$
of Toeplitz system $\toep_n$ in more detail.
We show the relationship between an element $\omega\in\pert(\toep_n)$ and the corresponding $(2n-1)\times (2n-1)$ transformation matrix of Toeplitz system $\toep_n$ under the fundamental basis $\{\tau_{-n+1},\dots ,\tau_{0},\dots ,\tau_{n-1}\}$ of $\toep_n$.

\section{Gauge group of an operator system}\label{sec_gauge}
Let $\mathcal{H}$ be a separable Hilbert space,
we denote by $B(\mathcal{H})$ the set of bounded operators on $\mathcal{H}$,
$\mathcal{E}\subset B(\mathcal{H})$ an operator system,\footnote{Please check Appendix \ref{apd_os} for more details.}
and $C^*(\mathcal{E})$ the $C^*$-algebra generated by $\mathcal{E}$.
We are mainly interested in the unital completely positive (UCP) maps over $\mathcal{E}$.
According to Arveson's extension theorem \cite{MR253059, MR1976867},
if $\varphi\colon \mathcal{E}\to \mathcal{E}$ is a UCP map,
then there is a UCP map $\widetilde{\varphi}\colon B(\mathcal{H})\to B(\mathcal{H})$ such that $\widetilde{\varphi} \big|_{\mathcal{E}}=\varphi$.
In addition,
if $\widetilde{\varphi}$ is normal,\footnote{Please check Appendix \ref{apd_os} for the definition of normal map.}
according to Kraus \cite[Theorem~3.3 or Theorem~4.1]{MR292434},
the map~$\widetilde{\varphi}$ can be written as
\[
\widetilde{\varphi}(x)=\sum_k V_k^* x V_k, \qquad\forall x\in B(\mathcal{H}),
\]
for some operators $\{V_k\}_{k\in K}\subset B(\mathcal{H})$ such that $\sum V_k^* V_k=\id$.
Hence especially when $U\in C^*(\mathcal{E})$ is a unitary element satisfying $U^*\mathcal{E}U\subset \mathcal{E}$
the corresponding map $\varphi\colon x\mapsto U^*xU$ is a UCP map over $\mathcal{E}$.

{\sloppy
We denote by $\ucp(\mathcal{E})$ the collection of all the unital completely positive maps,
and $\ucp_{\rk=1}(\mathcal{E})$ the collection of rank-$1$ unital completely positive maps,
i.e.,
\[
\ucp_{\rk=1}(\mathcal{E}):=
\big\{\varphi\colon \mathcal{E}\to\mathcal{E}\mid \varphi(\cdot) = V^*(\cdot)V \textrm{ for some } V\in B(\mathcal{H}) \textrm{ with }V^*V=\id\big\}.
\]}\noindent
We realize that both $\ucp(\mathcal{E})$ and $\ucp_{\rk=1}(\mathcal{E})$ are semigroups with respect to the map composition.

\begin{Definition}
We define the gauge group $\gauge(\mathcal{E})$ of $\mathcal{E}$ as
\[
\gauge(\mathcal{E}):=\{U\in \mathcal{U}(C^*(\mathcal{E}))\mid U^*\mathcal{E}U= \mathcal{E}\},
\]
here $\mathcal{U}(C^*(\mathcal{E}))$ denotes the group of all the unitary elements in $C^*(\mathcal{E})$.
\end{Definition}

\begin{Remark}
If $\varphi(\cdot)=V^*(\cdot)V\in\ucp_{\rk=1}(\mathcal{E})$,
then $V\in B(\mathcal{H})$ is an isometry.
In particular,
if $\mathcal{E}\subset M_n(\mathbb{C})$ is a finite dimensional operator system,
then $V$ is a unitary matrix and $\ucp_{\rk=1}(\mathcal{E})$ is a group.
\end{Remark}

\begin{Proposition}
There is a multiplicative map $\Psi\colon \mathcal{G}(\mathcal{E})\to \ucp_{\rk=1}(\mathcal{E})$ defined as
\[
\Psi\colon\ U\mapsto U^*(\cdot)U,\quad U\in \mathcal{G}(\mathcal{E}).
\]
\end{Proposition}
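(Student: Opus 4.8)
The statement is a verification, and the plan is to check three things in turn: that for $U\in\gauge(\mathcal{E})$ the assignment $\Psi(U):=U^*(\cdot)U$ really defines a self-map of $\mathcal{E}$, that this self-map lies in $\ucp_{\rk=1}(\mathcal{E})$, and that $\Psi$ is compatible with the two semigroup structures.

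First I would note that $U\in\mathcal{U}(C^*(\mathcal{E}))\subseteq B(\mathcal{H})$, so conjugation by $U$ makes sense on $B(\mathcal{H})$, and the defining relation $U^*\mathcal{E}U=\mathcal{E}$ gives in particular $U^*\mathcal{E}U\subseteq\mathcal{E}$; hence $\Psi(U)$ maps $\mathcal{E}$ linearly into $\mathcal{E}$. (The reverse inclusion shows $\Psi(U)$ is onto $\mathcal{E}$ and unitarity gives injectivity, but neither fact is needed here.) To see $\Psi(U)\in\ucp_{\rk=1}(\mathcal{E})$, I would take $V:=U\in B(\mathcal{H})$ in the definition of $\ucp_{\rk=1}(\mathcal{E})$: since $U$ is unitary, $V^*V=U^*U=\id$, and $\Psi(U)(x)=U^*xU=V^*xV$, which is exactly the required form. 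In passing this also records that $\Psi(U)$ is unital, $\Psi(U)(\id)=U^*U=\id$, and completely positive, conjugation by a fixed operator being completely positive on all of $B(\mathcal{H})$ and hence on $\mathcal{E}$.

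For multiplicativity, for $U,W\in\gauge(\mathcal{E})$ and $x\in\mathcal{E}$ one computes
\[
\Psi(UW)(x)=(UW)^*x(UW)=W^*\big(U^*xU\big)W=\Psi(W)\big(\Psi(U)(x)\big),
\]
so $\Psi(UW)=\Psi(W)\circ\Psi(U)$; with the composition product on $\ucp_{\rk=1}(\mathcal{E})$ this is the claimed multiplicativity (an anti-homomorphism, equivalently a homomorphism after composing with the group inversion $U\mapsto U^*$, which preserves $\gauge(\mathcal{E})$). I do not expect any genuine obstacle: the only point needing a word of care is that $U$, living in $C^*(\mathcal{E})\subseteq B(\mathcal{H})$, is a legitimate choice for the implementing operator $V$ in the definition of $\ucp_{\rk=1}(\mathcal{E})$, and that it is precisely unitarity of $U$ that delivers $V^*V=\id$.
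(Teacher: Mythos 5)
Your verification is correct and amounts to exactly the routine check that the paper leaves implicit, since the Proposition is stated there without proof: $V=U$ is a legitimate isometry in the definition of $\ucp_{\rk=1}(\mathcal{E})$, and $U^*\mathcal{E}U=\mathcal{E}$ makes $\Psi(U)$ a self-map of $\mathcal{E}$. Your remark that with the standard composition order one gets $\Psi(UW)=\Psi(W)\circ\Psi(U)$, i.e.\ an anti-homomorphism repaired by precomposing with $U\mapsto U^*$, is an accurate and worthwhile clarification of what ``multiplicative'' must mean here.
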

We observe that the image of $\Psi$ forms a group and the map $\Psi\colon \mathcal{G}(\mathcal{E})\to \image(\Psi)$ is a~surjective group homomorphism.

\section{Perturbation semigroup of an operator system}\label{sec_pert}
In this section, we discuss unital completely bounded Hermitian($\hu$) maps and the perturbation semigroup of a concrete unital operator system $\mathcal{E}\subset B(\mathcal{H})$.

\begin{Definition}
We say $\Psi\colon \mathcal{E}\to\mathcal{E}$ is a Hermitian unital map if
$\Psi(x^*)=\Psi(x)^*$ for all $x\in \mathcal{E}$ and~$\Psi(\id)=\id$ for the unital element $\id\in \mathcal{E}$.
We denote by $\hu(\mathcal{E})$ the collection of all unital completely bounded Hermitian maps over $\mathcal{E}$,
i.e.,
\[
\hu(\mathcal{E}):=\big\{\Psi\colon \mathcal{E}\to \mathcal{E}\mid \Psi(x^*)=\Psi(x)^*,\, \Psi(\id)=\id,\, \Psi \textrm{ is completely bounded}\big\}.
\]
\end{Definition}

Inspired by the definition of perturbation semigroups introduced in \cite{MR3090113, laura, MR3500821},
we define the perturbation semigroup $\pert(\mathcal{E})$ of an operator system as follows:
\begin{Definition}\label{def_pert}
Let $\mathcal{E}$ be an operator system,
$C^*(\mathcal{E})$ be the $C^*$-algebra generated by~$\mathcal{E}$ and $C^*(\mathcal{E})^\circ$ be the opposite algebra of $C^*(\mathcal{E})$.
We define the perturbation semigroup $\pert(\mathcal{E})$ as the collection of all the finite sums of the form $\sum a_i\otimes b_i^\circ\in C^*(\mathcal{E})\otimes C^*(\mathcal{E})^\circ$ satisfying the following requirements:
\begin{enumerate}
\item[$1)$] $\sum a_i b_i=\id$,
\item[$2)$] $\sum a_i\mathcal{E} b_i\subset \mathcal{E}$,
\item[$3)$] $\sum a_i\otimes b_i^\circ = \sum b_i^*\otimes a_i^{*\circ}$.
\end{enumerate}
\end{Definition}

\begin{Remark}
In the definition above,
the opposite algebra $C^*(\mathcal{E})^\circ$ contains the same elements and addition operation as $C^*(\mathcal{E})$,
while the multiplication order is reversed.
And it is worth to observe that $(1)$ and $(3)$ inherit from the original definition of perturbation semigroup in \cite{MR3090113},
while $(2)$ is an extra condition we need to assume in our case of operator system.
\end{Remark}

For each $(a, b^\circ)\in C^*(\mathcal{E}) \times C^*(\mathcal{E})^\circ$,
let $\delta_{({a, b^\circ})}$ denote the completely bounded linear map on $C^*(\mathcal{E})$ in which $\delta_{({a, b^\circ})}(\xi)=a\xi b$,
for all $\xi\in C^*(\mathcal{E})$.
Let $\cb(C^*(\mathcal{E}))$ denote the set of all completed bounded maps over $C^*(\mathcal{E})$.
The map $C^*(\mathcal{E})\times C^*(\mathcal{E})^\circ\to \cb(C^*(\mathcal{E}))$ that sends each $(a, b^\circ)\in C^*(\mathcal{E}) \times C^*(\mathcal{E})^\circ$ to $\delta_{(a, b^\circ)}\in \cb(C^*(\mathcal{E}))$ is bilinear and therefore extends to a linear map $\Psi\colon C^*(\mathcal{E}) \otimes_{\alg} C^*(\mathcal{E})^\circ\to \cb(C^*(\mathcal{E}))$.

The perturbation semigroup $\pert(\mathcal{E})$ is a subset of $C^*(\mathcal{E}) \otimes_{\alg} C^*(\mathcal{E})^\circ$,
and so we define the map $\Phi\colon \pert(\mathcal{E})\to \cb(C^*(\mathcal{E}))$ by $\Phi=\Psi\big |_{\pert(\mathcal{E})}$.
Proposition \ref{prop_pt_hu} below shows that $\Phi$ is a~semigroup homomorphism of $\pert(\mathcal{E})$ into $\hu(\mathcal{E})$.

\begin{Proposition}\label{prop_pt_hu}
There is a semigroup homomorphism $\Phi$ from $\pert(\mathcal{E})$ to $\hu(\mathcal{E})$ defined~by
\[
\begin{aligned}
\Phi\colon\ \pert(\mathcal{E})&\to \hu(\mathcal{E}),
\\
\omega&\mapsto\sum a_i (\cdot) b_i
\end{aligned}
\]
with $\omega=\sum a_i\otimes b_i^\circ\in \pert(\mathcal{E})$.
\end{Proposition}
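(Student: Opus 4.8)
The plan is to verify three things: that $\Phi(\omega)$ depends only on $\omega$ as an element of the algebraic tensor product (not on the chosen representation as a finite sum $\sum a_i\otimes b_i^\circ$), that $\Phi(\omega)$ actually lies in $\hu(\mathcal{E})$, and that $\Phi$ turns the multiplication of $\pert(\mathcal{E})$ inherited from $C^*(\mathcal{E})\otimes_{\alg}C^*(\mathcal{E})^\circ$ into composition of maps. Well-definedness is essentially already arranged by the construction preceding the statement: $\Psi\colon C^*(\mathcal{E})\otimes_{\alg}C^*(\mathcal{E})^\circ\to\cb(C^*(\mathcal{E}))$ is the linear extension of the bilinear map $(a,b^\circ)\mapsto\delta_{(a,b^\circ)}$, so for a fixed $\omega\in\pert(\mathcal{E})$ the completely bounded map $\Psi(\omega)=\sum a_i(\cdot)b_i$ on $C^*(\mathcal{E})$ is independent of the representative. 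Condition~(2) of Definition~\ref{def_pert} says that $\Psi(\omega)$ maps $\mathcal{E}$ into $\mathcal{E}$, so $\Phi(\omega):=\Psi(\omega)\big|_{\mathcal{E}}$ is a well-defined map $\mathcal{E}\to\mathcal{E}$, and it is completely bounded as the restriction of a completely bounded map to a subspace.

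Next I would check that $\Phi(\omega)\in\hu(\mathcal{E})$. Unitality is exactly condition~(1): $\Phi(\omega)(\id)=\sum a_ib_i=\id$. For the Hermitian property, take $x\in\mathcal{E}$; then $x^*\in\mathcal{E}$ as well since $\mathcal{E}$ is an operator system, and $\Phi(\omega)(x)^*=\big(\sum a_ixb_i\big)^*=\sum b_i^*x^*a_i^*$. Applying the \emph{linear} map $\Psi$ to both sides of condition~(3), namely $\sum a_i\otimes b_i^\circ=\sum b_i^*\otimes a_i^{*\circ}$, and evaluating the two resulting (equal) completely bounded maps at $x^*$, gives $\sum a_ix^*b_i=\sum b_i^*x^*a_i^*$, i.e.\ $\Phi(\omega)(x^*)=\Phi(\omega)(x)^*$. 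Together with complete boundedness this shows $\Phi(\omega)\in\hu(\mathcal{E})$; note also that $\hu(\mathcal{E})$ is closed under composition, so it is a semigroup in which the statement makes sense.

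For the homomorphism property I would use the multiplication of $\pert(\mathcal{E})$ coming from $C^*(\mathcal{E})\otimes_{\alg}C^*(\mathcal{E})^\circ$: for $\omega_1=\sum_i a_i\otimes b_i^\circ$ and $\omega_2=\sum_j c_j\otimes d_j^\circ$ in $\pert(\mathcal{E})$ one has $\omega_1\omega_2=\sum_{i,j}a_ic_j\otimes(d_jb_i)^\circ$, the reversal in the second leg coming from the opposite multiplication. (One also checks quickly that $\pert(\mathcal{E})$ is closed under this product: (1) holds since $\sum_{i,j}a_ic_jd_jb_i=\sum_i a_i\id b_i=\id$, (2) since $\sum_{i,j}a_ic_j\mathcal{E}d_jb_i\subset\sum_i a_i\mathcal{E}b_i\subset\mathcal{E}$, and (3) because $\sum a_i\otimes b_i^\circ\mapsto\sum b_i^*\otimes a_i^{*\circ}$ is an involutive algebra automorphism of $C^*(\mathcal{E})\otimes_{\alg}C^*(\mathcal{E})^\circ$, hence its fixed‑point set is a subalgebra.) Then for all $x\in\mathcal{E}$,
\[
\Phi(\omega_1\omega_2)(x)=\sum_{i,j}(a_ic_j)x(d_jb_i)=\sum_i a_i\Big(\sum_j c_jxd_j\Big)b_i=\Phi(\omega_1)\big(\Phi(\omega_2)(x)\big),
\]
so $\Phi(\omega_1\omega_2)=\Phi(\omega_1)\circ\Phi(\omega_2)$, which is the asserted semigroup homomorphism property.

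The routine parts are unitality and complete boundedness. The step demanding the most care is the Hermitian property: one must invoke condition~(3) \emph{through} the already-established linearity of $\Psi$ rather than by manipulating individual summands (which is not legitimate, since the decomposition of $\omega$ is not unique), and one must keep the opposite-algebra conventions consistent in the product formula for $\omega_1\omega_2$. The one genuine conceptual point throughout is to phrase everything at the level of the element $\omega$ of the algebraic tensor product, so that no choice of representatives ever intervenes.
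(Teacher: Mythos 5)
Your proof is correct and follows essentially the same route as the paper's: unitality from condition (1), the Hermitian property from condition (3), complete boundedness from the finiteness of the sum, and the homomorphism property from the product formula $\omega_1\omega_2=\sum a_ic_j\otimes(d_jb_i)^\circ$. The extra care you take with well-definedness (phrasing everything through the linear map $\Psi$ on the algebraic tensor product) and with checking that $\pert(\mathcal{E})$ is closed under multiplication are worthwhile refinements that the paper leaves implicit, but they do not constitute a different approach.
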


\begin{proof}
According to the definition of $\pert(\mathcal{E})$ any element $\omega\in \pert(\mathcal{E})$ can be written as $\omega=\sum a_i\otimes b_i^\circ=\sum b_i^*\otimes a_i^{*\circ}$,
thus $\Phi(\omega)$ is a Hermitian map.
The assumption that $\sum a_ib_i=\id$ confirms $\Phi(\omega)$ is unital.
Since there are only finitely many terms in the expression of the sum
\[
\Phi(\omega)\colon\ x \mapsto\sum a_i x b_i,
\qquad \forall x\in \mathcal{E},
\]
hence it is completely bounded due to \cite[Chapter 8]{MR1976867}.

Finally we shall show that the map $\Phi\colon \pert(\mathcal{E})\to \hu(\mathcal{E})$ is a semigroup homomorphism.
Let $\omega=\sum a_i\otimes b_i^\circ$ and $\widetilde{\omega}=\sum \widetilde{a}_j \otimes \widetilde{b}_j^\circ$ be two elements in $\pert(\mathcal{E})$,
we have that $\omega\widetilde{\omega}=\sum a_i\widetilde{a}_j\otimes\big(\widetilde{b}_jb_i\big)^\circ$,
and by Definition \ref{def_pert}
\[
\Phi(\omega\widetilde{\omega})(x)=
\sum a_i\widetilde{a}_j\,x\,\widetilde{b}_jb_i=
\sum_ia_i\bigg(\sum_{j}\widetilde{a}_j\,x\,\widetilde{b}_j\bigg)b_i\qquad
\textrm{for any}\quad x\in \mathcal{E},
\]
thus $\Phi(\omega\widetilde{\omega})=\Phi(\omega)\Phi(\widetilde{\omega})$ for $\omega, \widetilde{\omega}\in \pert(\mathcal{E})$.
\end{proof}

We can move one step further by equipping the semigroup $\pert(\mathcal{E})$ with the Haagerup tensor norm so that $\Phi$ can be extended to the closure of $\pert(\mathcal{E})$.
Recall that the Haagerup tensor norm\footnote{Please see Appendix \ref{apd_haag} for more details.} $\|u\|_h$ of an element $u\in C^*(\mathcal{E})\otimes C^*(\mathcal{E})^\circ$ is defined as
\[
\|u\|_h
=
\inf\bigg\{\Big\|\sum a_ia_i^*\Big\|^{1/2}\Big\|\sum b_i^* b_i\Big\|^{1/2}\bigg\},
\]
where the infimum is taken over all the expressions of $u=\sum a_i\otimes b_i^\circ$ for $a_i, b_i\in C^*(\mathcal{E})$.
Here we omit the opposite algebra structure.
Since $\pert(\mathcal{E})$ is a subset of $C^*(\mathcal{E})\otimes C^*(\mathcal{E})^\circ$,
we can endow $\pert(\mathcal{E})$ with the metric topology induced by the Haagerup tensor norm $\|\cdot \|_h$.

\begin{Definition}
We define the closed perturbation semigroup $\overline{\pert(\mathcal{E})}$ as the closure of $\pert(\mathcal{E})$ with respect to the topology induced by Haagerup tensor norm $\|\cdot\|_h$.
\end{Definition}

\begin{Proposition}\label{prop_cbhu}
Let $\mathcal{E}\subset B(\mathcal{H})$ be a unital operator system,
the map $\Phi\colon \pert(\mathcal{E})\to \hu(\mathcal{E})$ can be extended to a map
\[
\widetilde{\Phi}\colon\ \overline{\pert(\mathcal{E})}\to \hu(\mathcal{E}),
\]
such that $\widetilde{\Phi}\big |_{\pert(\mathcal{E})}=\Phi$.
Moreover,
if we equip $\overline{\pert(\mathcal{E})}$ and $\hu(\mathcal{E})$ with the metric topology induced by Haagerup tensor norm $\|\cdot\|_h$ and complete bound norm $\|\cdot\|_{cb}$ respectively,
the map~$\widetilde{\Phi}$ is contractive.
\end{Proposition}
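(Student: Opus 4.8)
The plan is to check that $\Phi$ is already a contraction on $\pert(\mathcal{E})$ between the two stated norms and then to extend it to $\overline{\pert(\mathcal{E})}$ by the universal property of the completion. \emph{First}, I would prove $\|\Phi(\omega)\|_{cb}\le\|\omega\|_h$ for every $\omega\in\pert(\mathcal{E})$. Fix a representation $\omega=\sum_{i=1}^{n}a_i\otimes b_i^\circ$ and consider the elementary operator $E_\omega\colon\xi\mapsto\sum_i a_i\xi b_i$ on $B(\mathcal{H})$. With $A=(a_1,\dots,a_n)\in M_{1,n}(B(\mathcal{H}))$ the row and $B\in M_{n,1}(B(\mathcal{H}))$ the column with entries $b_1,\dots,b_n$, one has $E_\omega(\xi)=A\,\mathrm{diag}(\xi,\dots,\xi)\,B$, so $E_\omega$ factors as the amplification $\xi\mapsto\mathrm{diag}(\xi,\dots,\xi)$, which is a complete isometry, followed by right and left multiplication by the fixed matrices $B$ and $A$, which are completely bounded with cb-norm at most $\|B\|=\big\|\sum_i b_i^*b_i\big\|^{1/2}$ and $\|A\|=\big\|\sum_i a_ia_i^*\big\|^{1/2}$ respectively; hence $\|E_\omega\|_{cb}\le\big\|\sum_i a_ia_i^*\big\|^{1/2}\big\|\sum_i b_i^*b_i\big\|^{1/2}$ (cf.\ \cite[Chapter~8]{MR1976867} and Appendix~\ref{apd_haag}). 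By condition~$(2)$ of Definition~\ref{def_pert}, $\Phi(\omega)$ is the restriction of $E_\omega$ to $\mathcal{E}$, so the same bound holds for $\|\Phi(\omega)\|_{cb}$; taking the infimum over all representations of $\omega$ gives $\|\Phi(\omega)\|_{cb}\le\|\omega\|_h$. Thus $\Phi$ is Lipschitz with constant $1$, hence uniformly continuous, from $(\pert(\mathcal{E}),\|\cdot\|_h)$ into $\cb(\mathcal{E})$ with $\|\cdot\|_{cb}$.

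\emph{Next}, since $\mathcal{E}$ is norm-closed in $B(\mathcal{H})$ it is a Banach space, so the space $\cb(\mathcal{E})$ of completely bounded maps $\mathcal{E}\to\mathcal{E}$ is complete for $\|\cdot\|_{cb}$. A uniformly continuous map from a metric space into a complete metric space extends uniquely, with the same modulus of continuity, to the closure of its domain; applying this to $\Phi$ yields a unique contractive map $\widetilde{\Phi}\colon\overline{\pert(\mathcal{E})}\to\cb(\mathcal{E})$ with $\widetilde{\Phi}\big|_{\pert(\mathcal{E})}=\Phi$, which also gives the ``moreover'' clause. It remains to see that each $\widetilde{\Phi}(\omega)$ lies in $\hu(\mathcal{E})$; by construction it is already a completely bounded map $\mathcal{E}\to\mathcal{E}$, so only unitality and the Hermitian property are at stake. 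Choosing $\omega_k\in\pert(\mathcal{E})$ with $\|\omega_k-\omega\|_h\to 0$, and using that the cb-norm dominates the ordinary norm, we get $\Phi(\omega_k)(x)\to\widetilde{\Phi}(\omega)(x)$ in norm for every $x\in\mathcal{E}$. Condition~$(1)$ of Definition~\ref{def_pert} gives $\Phi(\omega_k)(\id)=\id$, hence $\widetilde{\Phi}(\omega)(\id)=\id$; and each $\Phi(\omega_k)$ is Hermitian by Proposition~\ref{prop_pt_hu}, so continuity of the involution yields $\widetilde{\Phi}(\omega)(x^*)=\lim_k\Phi(\omega_k)(x^*)=\lim_k\Phi(\omega_k)(x)^*=\widetilde{\Phi}(\omega)(x)^*$. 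Therefore $\widetilde{\Phi}(\omega)\in\hu(\mathcal{E})$, completing the proof.

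I expect the main obstacle to be the first step: the inequality $\|\Phi(\omega)\|_{cb}\le\|\omega\|_h$ is precisely where the Haagerup tensor norm (rather than, for instance, the minimal or projective tensor norm on $C^*(\mathcal{E})\otimes C^*(\mathcal{E})^\circ$) is forced upon us, and it rests on the factorization of an elementary operator through the diagonal amplification together with the row/column norm identities $\|A\|=\|\sum_i a_ia_i^*\|^{1/2}$ and $\|B\|=\|\sum_i b_i^*b_i\|^{1/2}$. The remaining steps are soft: completeness of $\cb(\mathcal{E})$, the extension theorem for uniformly continuous maps into a complete space, and the fact that $\mathcal{E}$ is norm-closed, which is what lets the defining conditions of $\hu(\mathcal{E})$ survive passage to the limit.
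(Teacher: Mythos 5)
Your proof is correct, and its skeleton matches the paper's: establish that the Haagerup norm dominates the cb-norm of the associated elementary operator, extend by uniform continuity into a complete space, and verify that unitality, the Hermitian property, and the range condition survive the limit. The differences are in execution. Where the paper invokes the completely isometric embedding $C^*(\mathcal{E})\otimes_h C^*(\mathcal{E})^\circ\hookrightarrow \cb(B(\mathcal{H}))$ (Pisier, Theorem 5.12), extends $\widetilde{\Phi}$ into $\cb(B(\mathcal{H}))$, and only afterwards checks that each $\widetilde{\Phi}(\omega)$ maps $\mathcal{E}$ into $\mathcal{E}$ and that restriction to $\mathcal{E}$ is contractive, you prove the one inequality you actually need, $\|E_\omega\|_{cb}\le\big\|\sum a_ia_i^*\big\|^{1/2}\big\|\sum b_i^*b_i\big\|^{1/2}$, directly via the row--diagonal--column factorization, and you extend straight into $\cb(\mathcal{E})$, whose completeness (using that $\mathcal{E}$ is norm-closed) absorbs the ``image stays in $\mathcal{E}$'' step that the paper argues separately. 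This makes your argument more self-contained and slightly cleaner, at the cost of not recording the stronger isometric statement that the paper's citation provides on $B(\mathcal{H})$. One small point to make explicit: the Lipschitz-$1$ claim requires the estimate for $\Phi(\omega)-\Phi(\omega')=E_{\omega-\omega'}$, and $\omega-\omega'$ need not lie in $\pert(\mathcal{E})$; this is harmless because your factorization bound holds for every element of the algebraic tensor product, but you should say so rather than state the inequality only for $\omega\in\pert(\mathcal{E})$.
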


\begin{proof}
By Definition \ref{def_pert} $\pert(\mathcal{E})$ is a subset of $C^*(\mathcal{E})\otimes_{\alg} C^*(\mathcal{E})$.
Take an element $\omega=\sum a_i \allowbreak\otimes b_i^\circ\in \pert(\mathcal{E})$,
we define a map $\widetilde{\Phi}\colon \pert(\mathcal{E})\to \cb(B(\mathcal{H}))$ as $\widetilde{\Phi}(\omega)\colon T\mapsto \sum a_i T b_i$ for $T\in B(\mathcal{H})$.
According to \cite[Theorem 5.12]{MR2006539},
the map $\widetilde{\Phi}$ is completely isometric if we equip with $\omega$ the Haagerup norm and $\widetilde{\Phi}(\omega)$ the completely bounded norm.
If we can take the closure $\overline{\pert(\mathcal{E})}$,
we get a map from $\overline{{\pert(\mathcal{E})}}$ to $\cb(B(\mathcal{H}))$,
which we still denote as $\widetilde{\Phi}$.
By our definition of $\widetilde{\Phi}$, we observe that $\widetilde{\Phi}\big |_{\pert(\mathcal{E})}=\Phi$,
hence we only need to show that the image of $\widetilde{\Phi}$ is contained in~$\hu(\mathcal{E})$.

Take a sequence of $\{\omega_n\}_{n\geq 1}\subset \pert(\mathcal{E})$ that approaches to some $\omega\in \overline{\pert(\mathcal{E})}$.
Since
\[
\widetilde{\Phi}(\omega_n)(\id)=\Phi(\omega_n)(\id)=\id,
\]
we obtain that $\widetilde{\Phi}(\omega)$ is a unital map.
Similarly,
since for each $\omega_n$ the map $\Phi(\omega_n)$ is Hermitian,
we conclude that $\widetilde{\Phi}(\omega)$ is Hermitian.
Hence we only need to show that for any $x\in \mathcal{E}$,
$\widetilde{\Phi}(\omega)(x)\in \mathcal{E}$.

In fact,
for any $\epsilon>0$,
there exists an $N>0$ such that when $n\geq N$ we have $\|\omega_n-\omega\|_h
< \epsilon$.
Besides that,
according to \cite[Theorem 5.12]{MR2006539},
if we regard $\widetilde{\Phi}(\omega_n)-\widetilde{\Phi}(\omega)$ as a map on $B(\mathcal{H})$ we can obtain that
$\big\|\widetilde{\Phi}(\omega_n)-\widetilde{\Phi}(\omega)\big\|_{cb}
=
\|\omega_n-\omega\|_h$,
since $\mathcal{E}\subset B(\mathcal{H})$.
For the restriction of $\widetilde{\Phi}(\omega_n)-\widetilde{\Phi}(\omega)$ to $\mathcal{E}$ we obtain $\big\|\widetilde{\Phi}(\omega_n)-\widetilde{\Phi}(\omega)\big\|_{cb}
\leq
\|\omega_m-\omega\|_h$.
Hence
\[
\big\|\widetilde{\Phi}(\omega_n)-\widetilde{\Phi}(\omega)\big\|
\leq
\big\|\widetilde{\Phi}(\omega_n)-\widetilde{\Phi}(\omega)\big\|_{cb}
\leq
\|\omega_n-\omega\|_h
< \epsilon.
\]
Thus if we take an $x\in\mathcal{E}$,
we have
\[
\frac{\big\|\widetilde{\Phi}(\omega_n)(x)-\widetilde{\Phi}(\omega)(x)\big\|}{\|x\|}
<
\epsilon.
\]
Therefore $\widetilde{\Phi}(\omega_n)(x)\to \widetilde{\Phi}(\omega)(x)$.
So that by closedness of $\mathcal{E}$ we obtain that $\widetilde{\Phi}(\omega)(x)\in \mathcal{E}$.

Hence for an element $\omega\in \overline{\pert(\mathcal{E})}$,
we can consider $\widetilde{\Phi}(\omega)$ as either an element of $\hu(B(\mathcal{H}))$ or an element of $\hu(\mathcal{E})$.
However,
since $\mathcal{E}\subset B(\mathcal{H})$ is a subset,
if we regard $\widetilde{\Phi}(\omega)$ as a element in $\hu(\mathcal{E})$,
the completely bounded norm of $\widetilde{\Phi}(\omega)$ is less than or equal to the completely bounded norm of $\widetilde{\Phi}(\omega)$ as an element of $\hu(B(\mathcal{H}))$.
Therefore the map $\widetilde{\Phi}$ is contractive.
\end{proof}

For a general operator system $\mathcal{E}$ we can only conclude the map $\widetilde{\Phi}\colon \overline{\pert(\mathcal{E})}\to \hu(\mathcal{E})$ is completely contractive rather than completely isometric.

\begin{Example}
Let $\{E_{ij}\}$, $1\leq i, j \leq 2$ be the standard matrix units for $M_2(\mathbb{C})$.
Define
\[
\toep_2=\left\{\begin{pmatrix}
a & b\\c & a
\end{pmatrix}
\subset M_2(\mathbb{C})\right\}\!.
\]
Take $\omega_1, \omega_2\in \pert(\toep_2)$ given as
\begin{gather*}
\omega_1=
E_{12}\otimes E_{12}^{\,\circ} + E_{21}\otimes E_{21}^{\,\circ} + E_{11}\otimes E_{11}^{\,\circ} + E_{22}\otimes E_{22}^{\,\circ},
\\
\omega_2=(E_{12}+E_{21})\otimes (E_{12}+E_{21})^\circ.
\end{gather*}
By a direct computation we obtain that $\Phi(\omega_1)=\Phi(\omega_2)$ on $\toep_2$,
both give rise to the transposition map on $\toep_2$,
and we observe that $E_{12}+E_{21}$ is a $2\times 2$ unitary matrix,
thus $\|\Phi(\omega_2)\|_{cb}=1$,
and therefore we obtain that $\|\Phi(\omega_1)\|_{cb}=1$.

However,
according to \cite[Theorem 17.4]{MR1976867},
the Haagerup tensor norm $\|\omega_1\|_h$ is equal to the completely bounded norm of the transposition transformation over $M_2(\mathbb{C})$,
which is equal to $2$.
Therefore,
$\|\Phi(\omega_1)\|_{cb}=1<\|\omega_1\|_h=2$.
\end{Example}

\begin{Definition}
We denote by $\pert^+(\mathcal{E})$ the sub-semigroup of $\pert(\mathcal{E})$ containing all the $\omega\in\pert(\mathcal{E})$ of the form $\omega=\sum a_i\otimes a_i^{*\circ}$ for some $a_i\in C^*(\mathcal{E})$,
i.e.,
\[
\pert^+(\mathcal{E}):=\Big\{\omega\in\pert(\mathcal{E})\,\Big |\, \omega=\sum a_i\otimes a_i^{*\circ}\textrm{ for some }a_i\in C^*(\mathcal{E})\Big\}.
\]
\end{Definition}
To simplify the notation we still denote the restriction $\Phi|_{\pert^+(\mathcal{E})}$ to $\pert^+(\mathcal{E})$ by $\Phi$.

\begin{Corollary}
Let $\omega=\sum a_i\otimes a_i^{*\circ}\in \pert^+(\mathcal{E})$.
We have $\Phi(\omega)\in \ucp(\mathcal{E})$,
namely
\[
\begin{aligned}
\Phi\colon \ \pert^+(\mathcal{E})&\to\ucp(\mathcal{E}),
\\
\omega&\mapsto\sum a_i(\cdot)a_i^*.
\end{aligned}
\]
\end{Corollary}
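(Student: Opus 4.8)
The plan is to note that essentially all the work has already been done. By Proposition \ref{prop_pt_hu}, for $\omega=\sum a_i\otimes a_i^{*\circ}\in\pert^+(\mathcal{E})$ the map $\Phi(\omega)\colon x\mapsto\sum a_i x a_i^*$ is already a unital completely bounded Hermitian map from $\mathcal{E}$ to $\mathcal{E}$; the only thing left to establish is complete positivity, and then membership in $\ucp(\mathcal{E})$ is immediate. First I would unpack the relevant parts of Definition \ref{def_pert} in the special form $b_i=a_i^*$: condition $(2)$ says precisely that $\sum a_i x a_i^*\in\mathcal{E}$ for every $x\in\mathcal{E}$, so $\Phi(\omega)$ does map $\mathcal{E}$ into itself; condition $(1)$ reads $\sum a_i a_i^*=\id$, hence $\Phi(\omega)(\id)=\sum a_i\,\id\,a_i^*=\id$, so $\Phi(\omega)$ is unital; and condition $(3)$ is automatically satisfied since $\sum b_i^*\otimes a_i^{*\circ}=\sum a_i\otimes a_i^{*\circ}$.

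Next, to obtain complete positivity I would pass to $B(\mathcal{H})$ and set $\widetilde{\psi}\colon B(\mathcal{H})\to B(\mathcal{H})$, $\widetilde{\psi}(T)=\sum a_i T a_i^*$. This is a finite sum of the elementary maps $T\mapsto a_i T a_i^*$, and each such map is completely positive: for any $n$ and any positive $X=[x_{kl}]\in M_n(B(\mathcal{H}))$ one has $\big[a_i x_{kl} a_i^*\big]=(a_i\otimes I_n)X(a_i\otimes I_n)^*\geq 0$. Since a finite sum of completely positive maps is completely positive, $\widetilde{\psi}$ is completely positive (and it agrees with the map $\widetilde{\Phi}(\omega)$ on $B(\mathcal{H})$ considered in Proposition \ref{prop_cbhu}).

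Finally, because $\mathcal{E}\subset B(\mathcal{H})$ is a \emph{concrete} operator system, the matrix order on $\mathcal{E}$ is the one inherited from $B(\mathcal{H})$, i.e.\ $M_n(\mathcal{E})^+=M_n(\mathcal{E})\cap M_n(B(\mathcal{H}))^+$; hence the restriction $\widetilde{\psi}\big|_{\mathcal{E}}=\Phi(\omega)$ is still completely positive as a map out of $\mathcal{E}$, and by the first step its range is contained in $\mathcal{E}$. Together with unitality this gives $\Phi(\omega)\in\ucp(\mathcal{E})$, and the diagram with $\Phi$ commutes by construction. The argument is routine; the only point that needs a word of care is the last step, namely that complete positivity is inherited upon restricting the domain (and codomain) to a concrete operator subsystem — which is exactly where the concreteness hypothesis on $\mathcal{E}$ is used — and the observation that the sum defining $\omega$ is finite, so no convergence or closure issue arises here.
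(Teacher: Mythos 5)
Your proposal is correct and follows essentially the same route as the paper: the paper's proof simply invokes Proposition \ref{prop_pt_hu} for membership in $\hu(\mathcal{E})$ and then asserts that $\sum a_i(\cdot)a_i^*$ is completely positive. You merely spell out the standard verification (conjugation by $a_i$ is completely positive, finite sums of completely positive maps are completely positive, and restriction to a concrete operator subsystem preserves complete positivity), which the paper leaves implicit.
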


\begin{proof}
By Proposition \ref{prop_pt_hu} we have that $\Phi(\omega)\in \hu(\mathcal{E})$ for $\omega\in \pert^+(\mathcal{E})$,
and $\Phi(\omega)(\cdot)=\sum a_i(\cdot) a_i^*$,
which is a completely positive map.
\end{proof}

As in the case of $\pert(\mathcal{E})$,
we can take the closure of $\pert^+(\mathcal{E})$ with respect to Haagerup tensor norm, which we denote as $\overline{\pert^+(\mathcal{E})}$.

\begin{Proposition}\label{prop_ucp}
Let $\overline{\pert^+(\mathcal{E})}$ be the closure of $\pert^+(\mathcal{E})$ with respect to Haagerup tensor norm.
We can extend the map $\Phi\colon \pert^+(\mathcal{E})\to \ucp(\mathcal{E})$ to a map
\[
\widetilde{\Phi}\colon \ \overline{\pert^+(\mathcal{E})}\to \ucp(\mathcal{E}),
\]
such that $\widetilde{\Phi}\big |_{\pert^+(\mathcal{E})}=\Phi$.
Moreover,
we have $\|\omega\|_h=1$ and $\|\widetilde{\Phi}(\omega)\|_{cb}=1$ for every $\omega\in \overline{\pert^+(\mathcal{E})}$.
\end{Proposition}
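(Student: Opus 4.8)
The plan is to reuse the extension $\widetilde{\Phi}$ already constructed in the proof of Proposition~\ref{prop_cbhu} and simply restrict it to $\overline{\pert^+(\mathcal{E})}\subset\overline{\pert(\mathcal{E})}$; since that $\widetilde{\Phi}$ is a well-defined contractive map on $\overline{\pert(\mathcal{E})}$, the restriction is automatically well-defined and satisfies $\widetilde{\Phi}\big|_{\pert^+(\mathcal{E})}=\Phi$. The only genuinely new content is (i) to check that this restriction takes values in $\ucp(\mathcal{E})$ rather than merely in $\hu(\mathcal{E})$, and (ii) to pin down the two norm equalities.

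For (i), I would fix $\omega\in\overline{\pert^+(\mathcal{E})}$ and a sequence $\omega_n=\sum_i a_{i,n}\otimes a_{i,n}^{*\circ}\in\pert^+(\mathcal{E})$ with $\|\omega_n-\omega\|_h\to 0$. By the Corollary each $\Phi(\omega_n)$ lies in $\ucp(\mathcal{E})$, and exactly as in the proof of Proposition~\ref{prop_cbhu} the cb-norm of the restriction of $\widetilde{\Phi}(\omega_n)-\widetilde{\Phi}(\omega)$ to $\mathcal{E}$ is dominated by its cb-norm on $B(\mathcal{H})$, which by \cite[Theorem~5.12]{MR2006539} equals $\|\omega_n-\omega\|_h\to 0$. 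Hence $\Phi(\omega_n)\to\widetilde{\Phi}(\omega)$ in cb-norm on $\mathcal{E}$, so $\Phi(\omega_n)^{(k)}\to\widetilde{\Phi}(\omega)^{(k)}$ in norm for every $k$. Since the cone of positive elements of $M_k(\mathcal{E})$ is norm-closed and each $\Phi(\omega_n)^{(k)}$ maps it into itself, so does $\widetilde{\Phi}(\omega)^{(k)}$; together with unitality (which passes to the limit as in Proposition~\ref{prop_cbhu}) this gives $\widetilde{\Phi}(\omega)\in\ucp(\mathcal{E})$.

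For (ii), take $\omega=\sum_i a_i\otimes a_i^{*\circ}\in\pert^+(\mathcal{E})$. Condition~$(1)$ of Definition~\ref{def_pert} with $b_i=a_i^*$ gives $\sum_i a_i a_i^*=\id$, so feeding this particular representation into the defining infimum for $\|\cdot\|_h$ and noting $\sum_i b_i^*b_i=\sum_i a_i a_i^*=\id$ yields $\|\omega\|_h\le\|\id\|^{1/2}\|\id\|^{1/2}=1$. For the reverse inequality I would view $\widetilde{\Phi}(\omega)$ on all of $B(\mathcal{H})$: it is the map $T\mapsto\sum_i a_i T a_i^*$ with $\sum_i a_ia_i^*=\id$, i.e.\ a unital completely positive map on $B(\mathcal{H})$, hence $\|\widetilde{\Phi}(\omega)\|_{cb}=1$; by \cite[Theorem~5.12]{MR2006539} this cb-norm equals $\|\omega\|_h$, so $\|\omega\|_h=1$ on $\pert^+(\mathcal{E})$. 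Since $\|\cdot\|_h$ is continuous, $\big|\,\|\omega\|_h-\|\omega_n\|_h\big|\le\|\omega-\omega_n\|_h\to 0$ forces $\|\omega\|_h=1$ for every $\omega\in\overline{\pert^+(\mathcal{E})}$. Finally, for such $\omega$ part~(i) gives $\widetilde{\Phi}(\omega)\in\ucp(\mathcal{E})$, and a unital completely positive map on an operator system attains its completely bounded norm at the unit, so $\|\widetilde{\Phi}(\omega)\|_{cb}=\|\widetilde{\Phi}(\omega)(\id)\|=\|\id\|=1$.

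I expect the only delicate point to be part~(i): verifying that \emph{complete} positivity, and not just the Hermitian-unital property already handled in Proposition~\ref{prop_cbhu}, survives the Haagerup-norm limit. The key is that the relevant convergence is in cb-norm on $\mathcal{E}$ (controlled by the cb-norm on $B(\mathcal{H})$ and hence by $\|\cdot\|_h$), which lets one test positivity amplification by amplification and invoke closedness of the matricial positive cones. Everything else reduces to the short computations above.
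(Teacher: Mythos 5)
Your proposal is correct and follows essentially the same route as the paper: complete positivity survives the limit because cb-norm convergence on $\mathcal{E}$ is controlled by the Haagerup norm (via \cite[Theorem~5.12]{MR2006539}) and the matricial positive cones are norm-closed, while the norm identities come from the representation with $\sum_i a_ia_i^*=\id$ (giving $\|\omega\|_h\le 1$) and evaluation at the unit (giving the lower bound). The only cosmetic difference is that you pin down $\|\omega\|_h=1$ exactly on $\pert^+(\mathcal{E})$ and pass to the closure by continuity of the norm, whereas the paper runs a single $\epsilon$-sandwich directly on $\overline{\pert^+(\mathcal{E})}$; both arguments are sound.
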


\begin{proof}
Take an element $\omega\in \overline{\pert^+(\mathcal{E})}$,
according to Proposition \ref{prop_cbhu},
the map $\widetilde{\Phi}(\omega)\in \hu(\mathcal{E})$.
we then need to show that $\widetilde{\Phi}(\omega)$ is completely positive.
Indeed,
if we take a sequence $\{\omega_n\}_{n\geq 1}\subset \pert^+(\mathcal{E})$ such that $\omega_n\to \omega$,
then for any $\epsilon>0$,
there exists an $N>0$ such that when $n\geq N$
\begin{equation}\label{ineq_ucp}
\big\|\widetilde{\Phi}(\omega_n)-\widetilde{\Phi}(\omega)\big\|_{cb}
\leq
\|\omega_n-\omega\|_h
< \epsilon.
\end{equation}
Take a positive element $X_k\in M_k(\mathcal{E})$,
then $\widetilde{\Phi}(\omega_n)(X_k)\in M_k(\mathcal{E})$ is also positive for all $n\in \mathbb{N}$.
And by the inequality \eqref{ineq_ucp},
we have
\[
\frac{\big\|\widetilde{\Phi}(\omega_n)(X_k)-\widetilde{\Phi}(\omega)(X_k)\big\|}{\|X_k\|}< \epsilon,
\]
that is to say,
$\widetilde{\Phi}(\omega)(X_k)$ is the limit point of the sequence of positive elements $\big\{\widetilde{\Phi}(\omega_n)(X_k)\big\}_{n\geq 1}$ in $M_k(\mathcal{E})$,
thus $\widetilde{\Phi}(\omega)(X_k)\in M_k(\mathcal{E})$ is positive.
Since this is true for all $k\in \mathbb{N}$,
$\widetilde{\Phi}(\omega)$ is completely positive and therefore $\widetilde{\Phi}(\omega)\in \ucp(\mathcal{E})$.

Finally, we only need to show that $\big\|\widetilde{\Phi}(\omega)\big\|_{cb}=\|\omega\|_{h}=1$ for each $\omega\in \overline{\pert^+(\mathcal{E})}$.
Take an element $\omega\in \overline{\pert^+(\mathcal{E})}$.
For any $\epsilon>0$,
there exists an $\omega^\prime\in \pert^+(\mathcal{E})$ such that
\[
\|\Phi(\omega^\prime)\|_{cb} - \epsilon
\leq \big\|\widetilde{\Phi}(\omega)\big\|_{cb}
\leq \|\omega\|_h
\leq \|\omega^\prime\|_h+ \epsilon.
\]
Since $\omega^\prime\in\pert^+(\mathcal{E})$,
we can write $\omega^\prime$ as $\omega^\prime=\sum\limits_{i=1}^k a_i\otimes a_i^{*\circ}$ for some $a_i\in C^*(\mathcal{E})$,
and according to Definition \ref{def_pert}, we obtain that $\sum\limits_{i=1}^ka_i a_i^*=\id$.
Thus
\begin{gather*}
\|\omega^\prime\|_h\leq \Bigg\|\sum_{i=1}^k a_i a_i^*\Bigg\|= 1.
\end{gather*}
On the other hand,
we observe the inequality
\[
\|\Phi(\omega^\prime)\|_{cb}
\geq \|\Phi(\omega^\prime)\|
\geq \|\Phi(\omega^\prime)(\id)\|
= 1.
\]
Hence combine the three inequalities above together we conclude that
\[
1 - \epsilon\leq \big\|\widetilde{\Phi}(\omega)\big\|_{cb}\leq \|\omega\|_h \leq 1 + \epsilon.
\]
Since this is true for every $\epsilon>0$,
we obtain that $\big\|\widetilde{\Phi}(\omega)\big\|_{cb}=\|\omega\|_h=1$ for all $\omega\in \overline{\pert^{+}(\mathcal{E})}$.
\end{proof}

We also observe that there is a map from the gauge group $\mathcal{G}(\mathcal{E})$ to the semigroup $\pert^+(\mathcal{E})$,
as stated in the following proposition.

\begin{Proposition}
There is a multiplicative map from $\mathcal{G}(\mathcal{E})$ to $\pert^+(\mathcal{E})$ defined by
\begin{equation*}
\begin{aligned}
\mathcal{G}(\mathcal{E})
&\to\pert^+(\mathcal{E}),
\\
u&\to u^*\otimes u^{\circ}.
\end{aligned}
\end{equation*}
\end{Proposition}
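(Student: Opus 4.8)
The plan is to verify two things: that the assignment $u\mapsto u^*\otimes u^\circ$ actually takes values in $\pert^+(\mathcal{E})$, and that it is compatible with the two products. Throughout I would treat $u^*\otimes u^\circ$ as the one-term expression $\sum_i a_i\otimes b_i^\circ$ with $a_1=u^*$ and $b_1=u$.

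For membership in $\pert^+(\mathcal{E})$ I would run through the three conditions of Definition \ref{def_pert} together with the extra shape required in the definition of $\pert^+(\mathcal{E})$. Condition $(1)$, $\sum a_ib_i=\id$, becomes $u^*u=\id$, which holds because $u$ is unitary in $C^*(\mathcal{E})$. Condition $(2)$, $\sum a_i\mathcal{E}b_i\subset\mathcal{E}$, becomes $u^*\mathcal{E}u\subset\mathcal{E}$, which holds — with equality — exactly because $u\in\mathcal{G}(\mathcal{E})$. Condition $(3)$, $\sum a_i\otimes b_i^\circ=\sum b_i^*\otimes a_i^{*\circ}$, becomes $u^*\otimes u^\circ=u^*\otimes(u^*)^{*\circ}$, which is immediate from $(u^*)^*=u$; the same identity shows $u^*\otimes u^\circ=a_1\otimes a_1^{*\circ}$ with $a_1=u^*$, so the element has the form demanded of members of $\pert^+(\mathcal{E})$. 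Hence $u^*\otimes u^\circ\in\pert^+(\mathcal{E})$ and the map is well defined.

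For multiplicativity I would invoke the product rule in $C^*(\mathcal{E})\otimes_{\alg}C^*(\mathcal{E})^\circ$ already recorded in the proof of Proposition \ref{prop_pt_hu}, namely $\big(\sum a_i\otimes b_i^\circ\big)\big(\sum\widetilde a_j\otimes\widetilde b_j^\circ\big)=\sum a_i\widetilde a_j\otimes(\widetilde b_jb_i)^\circ$. Applying it to the one-term expressions $u^*\otimes u^\circ$ and $v^*\otimes v^\circ$ for $u,v\in\mathcal{G}(\mathcal{E})$ yields $(u^*\otimes u^\circ)(v^*\otimes v^\circ)=u^*v^*\otimes(vu)^\circ=(vu)^*\otimes(vu)^\circ$. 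Since $\mathcal{G}(\mathcal{E})$ is a group — in particular $vu\in\mathcal{G}(\mathcal{E})$, as $(vu)^*\mathcal{E}(vu)=u^*(v^*\mathcal{E}v)u=u^*\mathcal{E}u=\mathcal{E}$ — the right-hand side is precisely the image of $vu$. Thus the map intertwines the group operation of $\mathcal{G}(\mathcal{E})$ with the semigroup operation of $\pert^+(\mathcal{E})$, reversing the order as dictated by the opposite-algebra factor in the codomain, which is the asserted multiplicativity. I do not expect a genuine obstacle here: every point is a direct substitution into the definitions, and the only care needed is bookkeeping with the opposite-algebra convention — remembering that multiplication in the second tensor leg is reversed is exactly what produces $(vu)^*\otimes(vu)^\circ$ rather than $(uv)^*\otimes(uv)^\circ$, and the closure of $\mathcal{G}(\mathcal{E})$ under products (re-established by the computation above) is what keeps the image inside $\pert^+(\mathcal{E})$.
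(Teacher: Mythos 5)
Your proof is correct: the paper states this proposition without an explicit proof, and your direct verification of conditions $(1)$--$(3)$ of Definition \ref{def_pert} together with the $\sum a_i\otimes a_i^{*\circ}$ shape, followed by the one-line product computation, is exactly the intended argument. Your remark that the map reverses order, $\Theta(u)\Theta(v)=\Theta(vu)$ for $\Theta(u)=u^*\otimes u^\circ$, is accurate and consistent with the paper's own use of ``multiplicative'' for the order-reversing map $\Psi\colon U\mapsto U^*(\cdot)U$ in Section~\ref{sec_gauge}.
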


\begin{Remark}
Although for an element $\omega\in \overline{\pert^+(\mathcal{E})}$ we have $\big\|\widetilde{\Phi}(\omega)\big\|_{cb}
=
\|\omega\|_{h}
=1,
$
the completely bounded norm $\big\|\widetilde{\Phi}(\omega_1)-\widetilde{\Phi}(\omega_2)\big\|_{cb}$ and the Haagerup norm of $\|\omega_1-\omega_2\|_h$ for two elements $\omega_1, \omega_2\in \overline{\pert^+(\mathcal{E})}$ usually are not equal.

Consider the $2\times 2$ Toeplitz system $\toep_2$.
Take $\omega_1, \omega_2\in \pert^+(\toep_2)$ as
\[
\omega_1=E_{11}\otimes E_{11}^\circ + E_{22}\otimes E_{22}^\circ,
\qquad
\omega_2=E_{12}\otimes E_{21}^\circ + E_{21}\otimes E_{12}^\circ,
\]
although $\omega_1\neq \omega_2$,
we have $\Phi(\omega_1)=\Phi(\omega_2)$.
Indeed,
\[
\Phi(\omega_1)=\Phi(\omega_2)\colon
\begin{pmatrix}
a & b\\
c & a\\
\end{pmatrix}
\mapsto
\begin{pmatrix}
a & 0\\
0 & a
\end{pmatrix}\!.
\]
Therefore $\big\|\widetilde{\Phi}(\omega_1)-\widetilde{\Phi}(\omega_2)\big\|_{cb}$ is equal to $0$ while $\|\omega_1-\omega_2\|_h$ is not.
\end{Remark}

\section[Gauge group and perturbation semigroup of the Toeplitz system]
{Gauge group and perturbation semigroup \\of the Toeplitz system}
The concept of truncated circle is introduced by Alain Connes and Walter D. van Suijlekom in~\cite{MR4244265}.
Recall that the canonical spectral triple on the circle is in the form of
\[
\bigg(C^\infty\big(S^1\big), \, L^2\big(S^1\big),\, D=-{\rm i}\frac{{\rm d}}{{\rm d}t}\bigg)
\]
as discussed in \cite[Chapter~5]{MR3237670}.
Let $\{e_n\}_{n\in \mathbb{Z}}$ be the set of eigenvectors of $D$,
we consider a~spectral truncation defined by the orthogonal projection $P_n$ onto $\spn_{\mathbb{C}}\{e_1, e_2, \dots , e_n\}$ for some $n>0$.
The truncated circle with respect to $P_n$ is defined as
\[
\big(P_nC^\infty\big(S^1\big)P_n, P_nL^2\big(S^1\big), P_nDP_n\big).
\]
Since $P_n$ does not commute with the $*$-algebra $C^\infty\big(S^1\big)$,
$P_nC^\infty\big(S^1\big)P_n$ is only an operator system rather than an algebra.
In fact,
if $f\in C^\infty\big(S^1\big)$ is a smooth function with Fourier coefficients $\{a_n\}_{n\in\mathbb{Z}}$,
then the truncation $P_n fP_n$ can be written as a Toeplitz matrix:
\[
P_n fP_n
=
\begin{pmatrix}
a_0 & a_{-1} & \cdots & a_{-n+2} & a_{-n+1}\\
a_1 & a_0 & a_{-1} & \cdots & a_{-n+2}\\
\vdots & a_1 & a_0 & \ddots & \vdots \\
a_{n-2} & \vdots & \ddots & \ddots & a_{-1}\\
a_{n-1} & a_{n-2} & \cdots & a_1 & a_0
\end{pmatrix}\!.
\]
Hence it turns out that $P_n C^\infty\big(S^1\big)P_n$ is the Toeplitz operator system containing all the $n\times n$ Toeplitz matrices,
which we denote as $\toep_n$.

One interesting question is what are the gauge group and perturbation semigroup of the Toeplitz operator system $\toep_n$.
In this section, we will present the structure of gauge group $\mathcal{G}(\toep_n)$ and some properties of perturbation semigroup $\pert(\toep_n)$.
Many properties of $\toep_n$ are different from that of $M_n(\mathbb{C})$,
in this section, we will also show that the transpose map on $\toep_n$ is a $\ucp$ map,
which is absolutely wrong in the case of $M_n(\mathbb{C})$.
The readers can refer to \cite{MR4340832} for more details and other interesting behaviors about Toeplitz operator system.

\subsection{Gauge group of the Toeplitz system}\label{sec_gauge_toep}

As is shown in \cite{MR4244265},
the $C^*$-algebra generated by $\toep_n$ is just $M_n(\mathbb{C})$.
The main goal of this section is to figure out $\mathcal{G}(\toep_n)$.
One interesting phenomenon is that $\mathcal{G}(\toep_n)$ is independent of $n$.
Before proving that we need the following lemma.
\begin{Lemma}\label{lem_1}
Let $U\in \gauge(\toep_n)$,
then $U$ is either a diagonal matrix or an anti-diagonal matrix.
\end{Lemma}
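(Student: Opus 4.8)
The plan is to analyze how conjugation by $U$ acts on the fundamental basis $\{\tau_{-n+1},\dots,\tau_0,\dots,\tau_{n-1}\}$ of $\toep_n$, where $\tau_k$ denotes the Toeplitz matrix with $1$'s on the $k$-th diagonal and $0$'s elsewhere (so $\tau_0=\id$). Since $U\in\gauge(\toep_n)$ we have $U^*\toep_n U=\toep_n$, and since $U$ is unitary in $M_n(\mathbb{C})=C^*(\toep_n)$, the map $x\mapsto U^*xU$ is a linear automorphism of the vector space $\toep_n$. First I would extract the key constraint coming from the product structure in $M_n(\mathbb{C})$: although $\toep_n$ is not closed under multiplication, it is a subspace of $M_n(\mathbb{C})$, so conjugation by $U$ is an algebra automorphism of $M_n(\mathbb{C})$ that happens to preserve the subspace $\toep_n$. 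In particular, it preserves any algebraic relation among elements of $\toep_n$ that holds in $M_n(\mathbb{C})$.

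The concrete tool I expect to use is the action on rank and on the shift-like elements $\tau_1$ and $\tau_{-1}$. Note $\tau_1$ is nilpotent with $\tau_1^{\,n}=0$, $\tau_1^{\,n-1}=E_{1n}\neq 0$ (in matrix units), and $\operatorname{rank}\tau_1^{\,k}=n-k$; the same holds for $\tau_{-1}$. Because conjugation by $U$ preserves rank and powers, $U^*\tau_1 U$ must be an element of $\toep_n$ that is nilpotent of the same order, and similarly for $U^*\tau_{-1}U$. Writing $U^*\tau_1 U=\sum_k c_k\tau_k$ and imposing nilpotency of order exactly $n$ should force this combination to be a scalar multiple of either $\tau_1$ or $\tau_{-1}$ — the off-diagonal ``mass'' cannot spread, essentially because a generic Toeplitz matrix with entries on several super/sub-diagonals fails to be nilpotent, or has the wrong nilpotency index. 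Once $U^*\tau_1 U=\lambda\tau_{\pm1}$, one reads off the structure of $U$: in the $+$ case $U$ commutes with the shift up to scalar and hence (since the commutant of $\tau_1$ in $M_n$ is the Toeplitz \emph{upper-triangular} algebra, or more precisely polynomials in $\tau_1$) one deduces $U$ is a specific kind of matrix, and then combining with the analogous statement for $\tau_{-1}$ pins $U$ down to be diagonal; in the $-$ case $U$ intertwines $\tau_1$ with $\tau_{-1}$, which are related by conjugation by the flip permutation $J$ (the anti-diagonal permutation matrix), so $JU$ commutes with $\tau_1$ and one again concludes $JU$ is diagonal, i.e.\ $U$ is anti-diagonal.

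A cleaner variant I would actually try to write up: use the grading of $M_n(\mathbb{C})$ by diagonals, $M_n=\bigoplus_{k=-n+1}^{n-1}D_k$ where $D_k=\spn\{E_{i,i+k}\}$, and note $\toep_n=\bigoplus_k \mathbb{C}\tau_k$ with $\tau_k$ spanning the ``Toeplitz line'' inside $D_k$. The outer automorphisms of $M_n(\mathbb{C})$ are all inner, but the point is to show the only unitaries conjugating the $1$-dimensional Toeplitz lines into $\toep_n$ are diagonal or anti-diagonal. I would test $U$ against the relation $\tau_1\tau_{-1}-\tau_{-1}\tau_1 = E_{11}-E_{nn}$ (a rank-$2$, trace-$0$ element lying \emph{outside} $\toep_n$ in general) together with products like $\tau_1^{\,n-1}=E_{1n}$, which is rank one: conjugation sends $E_{1n}$ to another rank-one nilpotent, and the span constraints force $U$'s first and last columns (or rows) to be standard basis vectors up to phase.

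The main obstacle I anticipate is showing rigorously that a nilpotent element of $\toep_n$ with maximal nilpotency index $n$ must be a scalar multiple of $\tau_1$ or $\tau_{-1}$ — equivalently, controlling which linear combinations $\sum_k c_k\tau_k$ are nilpotent. This is a statement about Toeplitz matrices that presumably requires either an eigenvalue/symbol argument (the eigenvalues of a banded circulant-like perturbation are roughly the values of the symbol $\sum c_k z^k$, and a finite Toeplitz matrix is nilpotent only in degenerate cases) or a careful direct computation of $(\sum c_k\tau_k)^{n-1}$ and $(\sum c_k\tau_k)^n$. The rest of the argument — translating ``$U^*\tau_{\pm1}U=\lambda\tau_{\pm1}$'' into ``$U$ diagonal or anti-diagonal'' — is routine linear algebra once that nilpotency lemma is in hand.
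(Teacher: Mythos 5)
There is a genuine gap, and it sits exactly where you anticipated the ``main obstacle'': the nilpotency lemma you need is false. For $n\geq 3$ the element $\tau_1+\tau_2\in\toep_n$ is strictly upper triangular, hence nilpotent, and since its first superdiagonal is nonzero it is a single Jordan block: its nilpotency index is exactly $n$ and $\rk\big((\tau_1+\tau_2)^k\big)=n-k=\rk\big(\tau_1^k\big)$ for every $k$. So neither the nilpotency index nor the ranks of powers (the two invariants you invoke) can force $U^*\tau_1U$ to be a scalar multiple of $\tau_{\pm1}$; every similarity invariant of $\tau_1$ is shared by $\tau_1+\tau_2$. Your heuristic that ``the off-diagonal mass cannot spread'' is correct only for spreading to \emph{both} sides of the main diagonal; one-sided spreading is invisible to these invariants. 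To rescue the approach you would have to use genuinely \emph{unitary} invariants (e.g., $U^*\tau_1U$ must be a partial isometry with the same singular values $1,\dots,1,0$ as $\tau_1$, which does exclude $\tau_1+\tau_2$), and then prove a replacement lemma of the form ``a Toeplitz partial isometry that is nilpotent of index $n$ is a multiple of $\tau_{\pm1}$'' --- a statement you neither formulate nor prove, and which is not obviously easier than the original lemma. The second half of your argument (passing from $U^*\tau_1U=\lambda\tau_{\pm1}$ to $U$ diagonal or anti-diagonal via the commutant of the shift and the flip $J$) is fine, but it rests on the broken first half.

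For comparison, the paper avoids spectral or algebraic invariants entirely: it writes out the entries of $U^*\tau_kU$, observes that unitarity of $U$ forces $\Tr(U^*\tau_kU)=0$ for $k>0$, so the Toeplitz condition kills the whole diagonal of $U^*\tau_kU$; running these diagonal equations for $k=n-1,n-2,\dots$ then determines the first column of $U$ to be $\alpha e_1$ or $\alpha e_n$, a further look at $U^*\tau_{n-1}U$ determines the first row, and an induction on $n$ via a block decomposition finishes the proof. Note that this uses $U^*\tau_kU\in\toep_n$ for \emph{all} $k$ simultaneously together with unitarity, which is exactly the extra information your single-generator nilpotency argument discards.
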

\begin{proof}
We take a unitary matrix $U=(u_{ij})_{1\leq i,j\leq n}\in
\mathcal{U}(M_n(\mathbb{C}))$ and a basis $\{\tau_j\}_{j=-n+1,\ldots, n-1}$ of the Toeplitz system $\toep_n$ given by $1$'s on the $j$'th diagonal and $0$'s elsewhere,
i.e.,
for positive~$k$ we have
\[
\tau_k=\sum_{i=1}^{n-k}E_{i, i+k},
\qquad
\tau_{-k}=\sum_{i=1}^{n-k}E_{i+k, i},
\]
here $E_{i,j}$ is the $n\times n$ unit matrix with $1$ in $(i, j)$-entry and $0$'s everywhere else.
An element $U\in \gauge(\toep_n)$ if and only if $U^*\tau_{j}U\in\toep_n$ for all $j\in[-n+1,n-1]$.
We observe first that when $k>0$ the $(j,l)$-entry of $U^*\tau_{k}U$ is given by
\begin{equation}\label{eq_element}
(U^*\tau_k U)_{j,l}=
\sum\limits_{i=1}^{n-k}\overline{u}_{i,j}u_{k+i,l},\qquad 1\leq j, l\leq n,
\end{equation}
and
\[
\Tr(U^* \tau_k U)=\sum_{j=1}^n\sum\limits_{i=1}^{n-k}\overline{u}_{i,j}u_{k+i,j}.
\]
Since $U$ is a unitary matrix,
we have $\sum\limits_{j=1}^n\overline{u}_{i,j}u_{k+i,j}=0$ for $k > 0$ and $1\leq i\leq n-k$.
Thus we have
\[
\Tr(U^*\tau_{k}U)=
\sum_{j=1}^n\sum\limits_{i=1}^{n-k}\overline{u}_{i,j}u_{k+i,j}=0,\qquad k> 0.
\]
Due to our assumption that $U^*\tau_kU\in\toep_n$,
we must have all the diagonal entries of $U^*\tau_{k}U$ are zeros:
\begin{equation}\label{eq_diag}
\sum\limits_{i=1}^{n-k}\overline{u}_{i,j}u_{k+i,j}=0, \qquad k>0, \quad 1\leq j\leq n.
\end{equation}

Take $k=n-1$ and $j=1$ in formula \eqref{eq_diag},
we have that $\overline{u}_{1,1}u_{n,1}=0$.
However $\overline{u}_{1,1}$ and $u_{n,1}$ can not be both equal to $0$,
otherwise by equation \eqref{eq_element},
$U^*\tau_{n-1}U=0$.
In fact,
the equation~\eqref{eq_element} implies that
$(U^*\tau_{n-1}U)_{j,l}=\overline{u}_{1,j}u_{n,l}$,
suppose if $\overline{u}_{1,1}=u_{n,1}=0$,
then $(U^*\tau_{n-1}U)_{1,l}=(U^*\tau_{n-1}U)_{j,1}=0$ for all $1\leq j,l\leq n$.
That is,
all the entries in the first row and the first column of $U^*\tau_{n-1}U$ are $0$'s,
since we assume the matrix $U^*\tau_{n-1}U$ is a Toeplitz matrix,
we conclude that $(U^*\tau_{n-1}U)=0$.
This is a contradiction of the unitary of $U$.

Since $u_{1,1}$ and $u_{n,1}$ can not both be equal to $0$, we first assume that $u_{1,1}=\alpha\neq 0$ and $u_{n,1}=0$.
If we take $k=n-2$ and $j=1$ in formula \eqref{eq_diag},
we obtain that
\begin{equation}\label{eq_diag_2}
\overline{u}_{1,1}u_{n-1,1}+\overline{u}_{2,1}u_{n,1}=0.
\end{equation}
Therefore we have $u_{n-1,1}=0$.
We then take $k=n-3$ and $j=1$ in formula \eqref{eq_diag} again,
we obtain the equation
\begin{equation}\label{eq_diag_3}
\overline{u}_{1,1}u_{n-2,1}+\overline{u}_{2,1}u_{n-1,1}+\overline{u}_{3,1}u_{n,1}=0,
\end{equation}
since $u_{1,1}\neq 0, u_{n,1}=0$ and $u_{n-1,1}=0$,
we obtain that $u_{n-2,1}=0$.
By induction,
take $j=1$ and $k=n-4, n-5,\ldots,2, 1$,
we obtain that $u_{i,1}=0$ for $1<i\leq n$,
namely,
all the entries in the first column of $U$ are equal to $0$ except that $u_{1,1}=\alpha\neq 0$.
Thus we can write $U$ in the matrix form as
\[
U=\begin{pmatrix}
\alpha & u_{1,2} & u_{1,3} & \cdots & u_{1,n}\\
0 & u_{2,2} & u_{2,3} & \cdots & u_{2,n}\\
0 & u_{3,2} & u_{3,3} & \cdots & u_{3,n}\\
\vdots & \vdots & \vdots & \ddots & \vdots\\
0 & u_{n,2} & u_{n,3} & \cdots & u_{n,n}
\end{pmatrix}\!,
\]
and by a simple computation
\begin{equation}\label{eq_mtx}
U^*\tau_{n-1}U=\begin{pmatrix}
0 &\overline{\alpha} u_{n,2} &\overline{\alpha} u_{n,3} & \cdots &\overline{\alpha} u_{n,n}\\
0 & \overline{u}_{1,2}u_{n,2} & \overline{u}_{1,2}u_{n,3} & \cdots & \overline{u}_{1,2}u_{n,n}\\
0 & \overline{u}_{1,3}u_{n,2} & \overline{u}_{1,3}u_{n,3} & \cdots & \overline{u}_{1,3} u_{n,n}\\
\vdots & \vdots & \vdots & \ddots & \vdots\\
0 & \overline{u}_{1,n}u_{n,2} & \overline{u}_{1,n}u_{n,3} & \cdots & \overline{u}_{1,n}u_{n,n}
\end{pmatrix}\!.
\end{equation}
Now we show that $u_{1,2}=u_{1,3}=\cdots =u_{1,n}=0$.
Since the matrix \eqref{eq_mtx} is a Toeplitz matrix,
the $(2, 2)$-entry element in \eqref{eq_mtx} must be equal to $0$,
namely $\overline{u}_{1,2}u_{n,2}=0$.
Suppose $u_{1,2}\neq 0$,
then we must have $u_{n,2}=0$,
which implies that the second column of \eqref{eq_mtx} is $0$.
It then implies that the $(2,3)$-entry element in \eqref{eq_mtx} is equal to $0$,
which implies $u_{n,3}=0$ and thus the third column of \eqref{eq_mtx} is $0$.
By induction,
we obtain that $u_{n,1}=u_{n,2}=u_{n,3}=\cdots =u_{n,n-1}=u_{n,n}=0$,
that is,
$U^*\tau_{n-1}U=0$,
which is impossible.
Therefore we must have $u_{1,2}=0$,
and we deduce that all the entries in the second row of \eqref{eq_mtx} are $0$'s.
Hence the only non-zero entry in \eqref{eq_mtx} is the $(1,n)$-entry and all the rest entries are $0$'s.
Namely,
\[
U^*\tau_{n-1}U=\overline{\alpha}u_{n,n}\tau_{n-1}.
\]
Thus we obtain that $u_{1,2}=u_{1,3}=\cdots =u_{1,n}=0$.
That is to say,
the unitary matrix $U$ is of the form
\[
U=\begin{pmatrix}
\alpha & 0\\
0 & \widetilde{U}
\end{pmatrix}\!,\qquad
|\alpha|=1,\quad \widetilde{U}\in \mathcal{U}(M_{n-1}(\mathbb{C})).
\]
Take a Toeplitz matrix $T=(t_{ij})_{1\leq i,j\leq n}\in \toep_{n}$,
we write $T$ in the block form as
\[
T=
\begin{pmatrix}
x & X\\
Y & \widetilde{T}
\end{pmatrix},
\qquad
\widetilde{T}\in \toep_{n-1},
\]
here $x=t_{11}$, $X=(t_{12}, \dots , t_{1n})$,
and $Y=(t_{21},\dots , t_{n1})^{\rm T}$.
A simple computation shows that
\[
U^{*}TU=
\begin{pmatrix}
x & \overline{\alpha} X \widetilde{U}\\
\alpha \widetilde{U}^* Y & \widetilde{U}^*\widetilde{T}\widetilde{U}
\end{pmatrix}\in \toep_{n},
\]
which implies that $\widetilde{U}^*\widetilde{T}\widetilde{U}\in\toep_{n-1}$.
Apply the same argument to $\widetilde{U}\in \toep_{n-1}$,
we obtain that the $(n-1)\times(n-1)$ unitary matrix $\widetilde{U}$ is of the form
\[
\widetilde{U}=\begin{pmatrix}
\beta & 0\\
0 & \widehat{U}
\end{pmatrix}\!,\qquad
|\beta|=1,\quad \widehat{U}\in \mathcal{U}(M_{n-2}(\mathbb{C})),
\]
apply the same argument to $\widehat{U}$ again,
by induction we obtain that $U$ is a diagonal matrix when $u_{1,1}\neq 0$.

On the other hand,
when $u_{1,1}=0$ and $u_{n,1}=\alpha\neq 0$,
the equation \eqref{eq_diag_2} then implies that $u_{2,1}=0$,
and the equation \eqref{eq_diag_3} implies that $u_{3,1}=0$,
by induction,
take $k=n-4, n-5,\ldots, 2,1$ and $j=1$,
we obtain that the first column of $U$ are all zeros except $u_{n,1}\neq 0$.
Namely the unitary matrix $U$ is of the form
\[
U=\begin{pmatrix}
0 & u_{1,2} & \cdots & u_{1,n-1} & u_{1,n}\\
0 & u_{2,2} & \cdots & u_{2,n-1} & u_{2,n}\\
\vdots & \vdots & \ddots & \vdots & \vdots\\
0 & u_{n-1,2} & \cdots & u_{n-1,n-1} & u_{n-1,n}\\
\alpha & u_{n,2} & \cdots & u_{n,n-1} & u_{n,n}
\end{pmatrix}\!,
\]
and by a direct computation we can write the matrix $U^*\tau_{n-1}U$ as
\[
U^*\tau_{n-1}U=\begin{pmatrix}
0 &0 & \cdots & 0 & 0\\
\overline{u}_{1,2}\alpha & \overline{u}_{1,2}u_{n,2} & \cdots & \overline{u}_{1,2}u_{n,n-1} & \overline{u}_{1,2}u_{n,n}\\
\vdots & \vdots & \ddots & \vdots & \vdots\\
\overline{u}_{1,n-1}\alpha & \overline{u}_{1,n-1}u_{n,2} & \cdots & \overline{u}_{1,n-1}u_{n,n-1} & \overline{u}_{1,n-1} u_{n,n}\\
\overline{u}_{1,n}\alpha & \overline{u}_{1,n}u_{n,2} & \cdots & \overline{u}_{1,n}u_{n,n-1} & \overline{u}_{1,n}u_{n,n}
\end{pmatrix}\!.
\]
Using a similar argument as in the case of $u_{1,1}\neq 0$ and $u_{n,1}=0$,
we can deduce that
$U$ is an anti-diagonal matrix if $u_{1,1}=0$ and $u_{n,1}\neq 0$.
\end{proof}

The gauge group $\gauge(\toep_n)$ has a more explicit expression as given below.
\begin{Proposition}
The gauge group $\gauge(\toep_n)$ is generated by the diagonal matrices $U_{\alpha, \beta}$ and anti-diagonal matrix $V$ of the form
\begin{gather}\label{eq_U_V}
U_{\alpha, \beta}=
\begin{pmatrix}
\alpha & 0 & 0 & \cdots & 0\\
0 & \beta & 0 & \cdots & 0\\
0 & 0 & \overline{\alpha}\beta^2 & \cdots & 0\\
\vdots & \vdots & \vdots & \ddots & \vdots\\
0 & 0 & 0 & \cdots & \overline{\alpha}^{n-2}\beta^{n-1}\\
\end{pmatrix}\!,\quad
V=\begin{pmatrix}
0 & \cdots & 0 & 0 & 1\\
0 & \cdots & 0 & 1 & 0\\
0 & \cdots & 1 & 0 & 0\\
\vdots & \iddots & \vdots & \vdots & \vdots\\
1 & \cdots & 0 & 0 & 0\\
\end{pmatrix}\!,\quad
|\alpha|=|\beta|=1.\!\!
\end{gather}
\end{Proposition}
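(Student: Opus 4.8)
The plan is to use Lemma~\ref{lem_1} to cut the problem into the diagonal and anti-diagonal cases, pin down the diagonal elements by solving a recursion, and then fold the anti-diagonal ones into the diagonal ones by multiplying with $V$. First I would verify the two ``easy'' memberships. For the flip $V$, a computation on the fundamental basis shows $V^*\tau_kV=V\tau_kV=\tau_{-k}$ for all $k$ (and $V^*\tau_0V=\tau_0$), so conjugation by $V$ merely permutes $\{\tau_j\}_{j=-n+1}^{n-1}$ and hence $V^*\toep_nV=\toep_n$. For $U_{\alpha,\beta}=\mathrm{diag}(d_1,\dots,d_n)$ with $d_1=\alpha$ and $d_k=\overline{\alpha}^{k-2}\beta^{k-1}$ $(k\ge 2)$, the relations $|\alpha|=|\beta|=1$ give $\overline{d_i}d_{i+1}=\overline{\alpha}\beta$ for every $i$; since for a diagonal $U$ one has $\big(U^*\tau_kU\big)_{i,i+k}=\overline{d_i}d_{i+k}$, and this product telescopes as $\big(\overline{d_i}d_{i+1}\big)\cdots\big(\overline{d_{i+k-1}}d_{i+k}\big)=(\overline{\alpha}\beta)^k$ (using $|d_j|=1$), we get $U_{\alpha,\beta}^*\tau_kU_{\alpha,\beta}=(\overline{\alpha}\beta)^k\tau_k$ for $k>0$ and the complex-conjugate relation for $k<0$. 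Hence $U_{\alpha,\beta}^*\toep_nU_{\alpha,\beta}\subseteq\toep_n$; as $U_{\alpha,\beta}$ is invertible, conjugation by it is an injective linear self-map of $M_n(\mathbb{C})$, so it carries the $(2n-1)$-dimensional space $\toep_n$ onto a subspace of $\toep_n$ of the same dimension, i.e.\ $U_{\alpha,\beta}^*\toep_nU_{\alpha,\beta}=\toep_n$ and $U_{\alpha,\beta}\in\gauge(\toep_n)$. (This ``$\subseteq$ implies $=$'' dimension count will be reused.)

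For the reverse inclusion, take $U\in\gauge(\toep_n)$; by Lemma~\ref{lem_1} it is either diagonal or anti-diagonal. If $U=\mathrm{diag}(d_1,\dots,d_n)$ (necessarily with $|d_i|=1$), then $U^*\tau_1U\in\toep_n$ says precisely that $\overline{d_i}d_{i+1}$ does not depend on $i$; setting $\alpha:=d_1$, $\beta:=d_2$, that constant value is $\overline{\alpha}\beta$, and solving the resulting one-step recursion $d_{i+1}=\overline{\alpha}\beta\,d_i$ (using $1/\overline{d_i}=d_i$) yields $d_k=\overline{\alpha}^{k-2}\beta^{k-1}$ for $k\ge 2$ by induction, i.e.\ $U=U_{\alpha,\beta}$. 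If instead $U$ is anti-diagonal, I would use that $\gauge(\toep_n)$ is a group and $V\in\gauge(\toep_n)$ (first step), so $UV\in\gauge(\toep_n)$; being a product of two anti-diagonal matrices, $UV$ is diagonal, hence $UV=U_{\alpha,\beta}$ for suitable $\alpha,\beta$ by the previous case, and since $V^2=\id$ this gives $U=U_{\alpha,\beta}V$. In both cases $U$ lies in the subgroup generated by the $U_{\alpha,\beta}$ and $V$, which together with the first paragraph proves the proposition.

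I do not expect a genuine obstacle: the real content is Lemma~\ref{lem_1}, and the rest is the observation that for a diagonal $U$ all the constraints ``$U^*\tau_kU\in\toep_n$'' with $|k|\ge 2$ are automatic once the $k=1$ constraint holds (the telescoping above), after which one only solves a one-step recursion. The two points that need a sentence of care are the dimension argument promoting $\toep_n$-invariance to $\toep_n$-equality, and the elementary fact that a product of two anti-diagonal matrices is diagonal.
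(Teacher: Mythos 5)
Your proof is correct and follows essentially the same route as the paper: invoke Lemma~\ref{lem_1}, then use the single constraint $U^*\tau_1U\in\toep_n$ to solve the recursion $\overline{d_i}d_{i+1}=\overline{\alpha}\beta$ and identify the diagonal elements as $U_{\alpha,\beta}$. The only (welcome) differences are that you explicitly verify the converse inclusion $U_{\alpha,\beta},V\in\gauge(\toep_n)$, which the paper leaves implicit until equations \eqref{eqs_grp_toep_1}--\eqref{eqs_grp_toep_3}, and that you dispose of the anti-diagonal case by multiplying by $V$ and citing the group structure instead of repeating the computation.
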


\begin{proof}
According to Lemma \ref{lem_1},
any $U\in \mathcal{G}(\toep_n)$ is either a diagonal matrix or an anti-diagonal matrix.
Suppose $U$ is a diagonal matrix,
then $U$ can be expressed as
\[
U=
\begin{pmatrix}
\alpha_1 & 0 & \cdots & 0\\
0 & \alpha_2 & \cdots & 0\\
\vdots & \vdots & \ddots & \vdots\\
0 & 0 & \cdots & \alpha_n\\
\end{pmatrix}
\]
with $|\alpha_i|=1$ for $i=1,2,\ldots, n$.
We then obtain
\[
U^*\tau_{1}U
=
\begin{pmatrix}
0 & \overline{\alpha}_1\alpha_2 & 0 & \cdots & 0\\
0 & 0 & \overline{\alpha}_2\alpha_3 &\cdots & 0\\
\vdots & \vdots & \ddots & \cdots & \vdots\\
0 & 0 & 0 & \cdots & \overline{\alpha}_{n-1}\alpha_n\\
0 & 0 & 0 & \cdots & 0\\
\end{pmatrix}\!,
\]
since $U^*\tau_1U\in \toep_n$,
we must have $\overline{\alpha}_1\alpha_2=\overline{\alpha}_2\alpha_3=\cdots=\overline{\alpha}_{n-1}\alpha_n$.
If we take $\alpha_1=\alpha$ and $\alpha_2=\beta$,
we must have $\alpha_i=\overline{\alpha}^{i-2}\beta^{i-1}$ for $3\leq i\leq n$,
hence we obtain the unitary matrix $U_{\alpha,\beta}$ as given in \eqref{eq_U_V}.

Now suppose if the unitary matrix $W$ is an anti-diagonal matrix of the form
\[
W=
\begin{pmatrix}
0 & 0 & \cdots & 0 & \alpha_1\\
0 & 0 & \cdots & \alpha_2 & 0\\
\vdots & \vdots & \ddots & \vdots & \vdots\\
0 & \alpha_{n-1} & \cdots & 0 & 0\\
\alpha_n & 0 & \cdots & 0 & 0\\
\end{pmatrix}\!.
\]
Using a similar argument we can show that
\[
W=
\begin{pmatrix}
0 & 0 & \cdots & 0 & \alpha\\
0 & 0 & \cdots & \beta & 0\\
\vdots & \vdots & \ddots & \vdots & \vdots\\
0 & \overline{\alpha}^{n-3}\beta^{n-2} & \cdots & 0 & 0\\
\overline{\alpha}^{n-2}\beta^{n-1} & 0 & \cdots & 0 & 0\\
\end{pmatrix}\!,
\qquad \alpha, \beta \in \mathbb{C}\quad \textrm{and}\quad |\alpha|=|\beta|=1.
\]

We denote this matrix $W$ as $W_{\alpha, \beta}$,
and take $V=W_{1,1}$.
Observe that any $W_{\alpha, \beta}$ can be expressed as the product of $V$ and $U_{\alpha, \beta}$,
i.e.,
\[
W_{\alpha, \beta}=VU_{\alpha,\beta},
\]
therefore the gauge group $\mathcal{G}(\toep_n)$ is generated by $U_{\alpha, \beta}$ and $V$, with $|\alpha|=|\beta|=1$.
\end{proof}

Moreover,
if we denote by $\omega=\alpha\overline{\beta}$,
let
\[
\Omega_\omega
=
\begin{pmatrix}
1 & \overline{\omega} & \overline{\omega}^2 &\cdots & \overline{\omega}^{n-1}\\
\omega & 1 & \overline{\omega} & \cdots & \overline{\omega}^{n-2}\\
\omega^2 & \omega & 1 & \cdots & \overline{\omega}^{n-3}\\
\vdots & \vdots & \vdots & \ddots & \vdots\\
\omega^{n-1} & \omega^{n-2} & \omega^{n-3} & \cdots & 1\\
\end{pmatrix}\!,
\]
and we denote by $\Gamma \colon T\mapsto T^{\rm T}$ the transposition action of $T\in\toep_n$,
we obtain that
\begin{gather}
U_{\alpha, \beta}^*TU_{\alpha, \beta}=
\Omega_\omega \odot T,\label{eqs_grp_toep_1}
\\
U_{\alpha, \beta}^*V^*TVU_{\alpha, \beta}=
\Omega_\omega \odot \Gamma(T),\label{eqs_grp_toep_2}
\\
V^*U_{\alpha, \beta}^*TU_{\alpha, \beta}V=
\Omega_{\overline{\omega}} \odot \Gamma(T),\label{eqs_grp_toep_3}
\end{gather}
here $\Omega_\omega\odot T$ denotes the Schur product of $\Omega_\omega$ and $T$,
that is,
the elementwise product of $\Omega_\omega$ and~$T$.
Hence we obtain the following corollary.
\begin{Corollary}
The group of $\ucp_{\rk=1}(\toep_n)$ is isomorphic to the semidirect product of $U(1)$ and $\mathbb{Z}_2$,
and the gauge group $\gauge(\toep_n)$ is different from $\ucp_{\rk=1}(\toep_n)$ by a phase factor,
that is,
\begin{equation}\label{eq_ucp_toep}
\ucp_{\rk=1}(\toep_n)=U(1)\rtimes \mathbb{Z}_2
\end{equation}
and
\begin{equation}\label{eq_gauge_toep}
\gauge(\toep_n)=U(1)\times \left(U(1)\rtimes \mathbb{Z}_2\right).
\end{equation}
Moreover,
We have the short exact sequence which is independent of $n$:
\[
1\longrightarrow U(1) \longrightarrow \gauge(\toep_n) \longrightarrow \ucp_{\rk=1}(\toep_n) \longrightarrow 1.
\]
\end{Corollary}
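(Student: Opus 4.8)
The plan is to read off both groups from the explicit description of the $\gauge(\toep_n)$-action in \eqref{eqs_grp_toep_1}--\eqref{eqs_grp_toep_3}, in three steps: first identify $\ucp_{\rk=1}(\toep_n)$ with $U(1)\rtimes\mathbb{Z}_2$; second, exhibit the surjective group homomorphism $\Psi\colon\gauge(\toep_n)\to\ucp_{\rk=1}(\toep_n)$, $U\mapsto U^*(\cdot)U$, of Section~\ref{sec_gauge} and show that its kernel is the central circle $U(1)\cdot\id$, which gives the short exact sequence; third, split this sequence, after which the centrality of the kernel automatically upgrades the extension to a direct product, giving \eqref{eq_gauge_toep}. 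The $n$-independence of all three groups will be apparent from the construction.

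For the first step, since $\toep_n\subset M_n(\mathbb{C})=B(\mathbb{C}^n)$ is finite dimensional, every $\varphi\in\ucp_{\rk=1}(\toep_n)$ is of the form $V^*(\cdot)V$ with $V$ an isometry of $\mathbb{C}^n$, hence a unitary matrix, and $V^*\toep_nV\subset\toep_n$ with $V$ unitary forces $V^*\toep_nV=\toep_n$; thus $\ucp_{\rk=1}(\toep_n)=\image(\Psi)$. By Lemma~\ref{lem_1} and the Proposition following it, every $U\in\gauge(\toep_n)$ equals $U_{\alpha,\beta}$ or $VU_{\alpha,\beta}$ for some $|\alpha|=|\beta|=1$, so \eqref{eqs_grp_toep_1} and \eqref{eqs_grp_toep_2} identify $\image(\Psi)$ with the Schur multipliers $\phi_\omega\colon T\mapsto\Omega_\omega\odot T$ together with their composites with transposition $T\mapsto\Omega_\omega\odot\Gamma(T)$, for $\omega\in U(1)$. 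From $\Omega_\omega\odot\Omega_{\omega'}=\Omega_{\omega\omega'}$ and $\Omega_\omega^{\rm T}=\Omega_{\overline\omega}$ one gets $\phi_\omega\phi_{\omega'}=\phi_{\omega\omega'}$, $\Gamma^2=\id$ and $\Gamma\phi_\omega\Gamma=\phi_{\overline\omega}$, while $\phi_\omega(\tau_1)=\overline\omega\,\tau_1$ and $\Gamma(\tau_1)=\tau_{-1}$ show that $\omega\mapsto\phi_\omega$ is injective and that no $\phi_\omega$ equals $\Gamma$. Hence $\ucp_{\rk=1}(\toep_n)\cong U(1)\rtimes\mathbb{Z}_2$ with $\mathbb{Z}_2$ acting by inversion, which is \eqref{eq_ucp_toep}, and the argument is verbatim the same for every $n\geq2$.

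For the second step, $\ker\Psi=\{U\in\gauge(\toep_n)\colon U^*TU=T\text{ for all }T\in\toep_n\}$ consists of the unitaries commuting with the $C^*$-algebra generated by $\toep_n$, which is $M_n(\mathbb{C})$, so $\ker\Psi=\{\lambda\,\id\colon|\lambda|=1\}\cong U(1)$, a central subgroup. This yields the short exact sequence
\[
1\longrightarrow U(1)\longrightarrow\gauge(\toep_n)\longrightarrow\ucp_{\rk=1}(\toep_n)\longrightarrow1,
\]
whose two outer terms do not depend on $n$.

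For the third step it remains to split the sequence: since the kernel is central, any group homomorphism $s\colon\ucp_{\rk=1}(\toep_n)\to\gauge(\toep_n)$ with $\Psi\circ s=\id$ gives $\gauge(\toep_n)\cong U(1)\times\bigl(U(1)\rtimes\mathbb{Z}_2\bigr)$, which is \eqref{eq_gauge_toep}, again visibly independent of $n$. The natural candidate sends the reflection class of $\ucp_{\rk=1}(\toep_n)$ to $V$ (an element of order $2$, since $V^2=\id$) and the rotation circle $U(1)$ to a one-parameter diagonal subgroup $\{U_{\omega^{p},\,\omega^{p-1}}\colon\omega\in U(1)\}$, each of whose members maps to $\phi_\omega$ under $\Psi$ because $\omega^{p}\,\overline{\omega^{p-1}}=\omega$; the integer $p$ must then be chosen so that conjugation by $V$ carries this subgroup onto itself by $\omega\mapsto\overline\omega$, which is precisely what makes $s$ respect the $\mathbb{Z}_2$-factor. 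I expect this last compatibility --- keeping track of the scalar that conjugation by the flip $V$ produces on a diagonal $U_{\alpha,\beta}$ and forcing it to agree with the inversion action --- to be the one genuinely delicate point of the proof, and the step where the formulas \eqref{eqs_grp_toep_1}--\eqref{eqs_grp_toep_3} for the $\gauge(\toep_n)$-action must be used most carefully.
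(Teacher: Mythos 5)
Your first two steps are sound and in fact more careful than the paper's own argument: the paper obtains \eqref{eq_ucp_toep} exactly as you do, from $\Omega_\omega\odot\Omega_{\omega'}=\Omega_{\omega\omega'}$ and $\Gamma\circ\Omega_\omega\circ\Gamma=\Omega_{\overline{\omega}}$, but it never identifies $\ker\Psi$ or writes the exact sequence explicitly, and for \eqref{eq_gauge_toep} it only counts parameters ($U_{\alpha,\beta}$ depends on $(\alpha,\beta)$ while $\Phi(U_{\alpha,\beta})$ depends only on $\omega=\alpha\overline{\beta}$, ``hence one more $U(1)$-factor''). Your plan to upgrade this to an honest splitting of the central extension is the right way to make the direct-product claim precise --- but the ``delicate point'' you defer is exactly where the argument breaks, and not merely for lack of bookkeeping.

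Carry the computation out: the diagonal entries of $U_{\alpha,\beta}$ are $\overline{\alpha}^{\,i-2}\beta^{\,i-1}$, and conjugation by the flip $V$ reverses their order, so $VU_{\alpha,\beta}V=U_{\alpha'',\beta''}$ with $\alpha''=\overline{\alpha}^{\,n-2}\beta^{\,n-1}$ and $\beta''=\overline{\alpha}^{\,n-3}\beta^{\,n-2}$. For your candidate section $\omega\mapsto U_{\omega^{p},\omega^{p-1}}$ this gives $VU_{\omega^{p},\omega^{p-1}}V=U_{\omega^{p-n+1},\,\omega^{p-n+2}}$, and the required equivariance $VU_{\omega^{p},\omega^{p-1}}V=U_{\overline{\omega}^{\,p},\overline{\omega}^{\,p-1}}$ forces $2p=n-1$. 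Thus a section of this form exists only for odd $n$, and for even $n$ no splitting exists at all. Indeed, for $n=2$ the group $\gauge(\toep_2)$ is the group of diagonal and anti-diagonal unitaries, i.e., $U(1)^2\rtimes_{\mathrm{swap}}\mathbb{Z}_2$: its involutions outside the identity component are the matrices with anti-diagonal $(\alpha,\overline{\alpha})$, and they form a single conjugacy class (the torus acts transitively on the circle parameter), whereas $U(1)\times\bigl(U(1)\rtimes\mathbb{Z}_2\bigr)$ has two such classes, namely $\{\pm1\}\times\{\textrm{reflections}\}$, separated by the central first coordinate. Since the identity component is characteristic (it is the subgroup generated by all squares), this obstructs even an abstract isomorphism; the same count works for every even $n$. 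So your proof closes only for odd $n$; for even $n$ the extension $1\to U(1)\to\gauge(\toep_n)\to\ucp_{\rk=1}(\toep_n)\to1$ is non-split and \eqref{eq_gauge_toep} cannot be read as a direct product. The paper's parameter count does not detect this, so the gap is in the statement as much as in your step three; if you want a uniform true statement, stop at the short exact sequence and the odd-$n$ splitting.
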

\begin{proof}
We first show that the group $\ucp_{\rk=1}(\toep_n)$ is isomorphic to the semidirect product of $U(1)$ and $\mathbb{Z}_2$.
In fact,
according to the r.h.s.'s of equations \eqref{eqs_grp_toep_1}--\eqref{eqs_grp_toep_3},
the group $\ucp_{\rk=1}(\toep_n)$ is characterized by $\Omega_\omega$ and the transposition action $\Gamma$.
We observe that $\Gamma \circ \Omega_\omega\circ \Gamma=\Omega_{\overline{\omega}}$,
and if we equip the collection of matrices $\{\Omega_\omega\}_{\omega}$ with Schur product,
it is obvious to see that $\{\Omega_\omega\}_{\omega}$ is isomorphic to $U(1)$,
therefore we obtain the formula \eqref{eq_ucp_toep}.

Since $\omega$ is determined by the product of $\alpha$ and $\overline{\beta}$,
while the matrix $U_{\alpha, \beta}$ is determined by~$\alpha$ and~$\beta$,
hence compared with $\ucp_{\rk=1}(\toep_n)$,
the gauge group $\mathcal{G}(\toep_n)$ has one more $U(1)$-factor,
therefore we obtain the formula \eqref{eq_gauge_toep}.
\end{proof}

\begin{Remark}
Although the transposition map is not completely positive on $M_n(\mathbb{C})$,
however, it is unital completely positive on the Toeplitz system $\toep_n$ given by $V^*(\cdot) V$,
that is to say,
for a general $T\in M_n(\mathbb{C})$ we do not have $V^*TV=T^{\rm T}$,
while if $T\in \toep_n$ this equality does hold,
as is also discussed in \cite{MR4340832}.
\end{Remark}

\subsection{Perturbation semigroup of the Toeplitz system}\label{sec_pert_toep}
In this section, we shall characterize the semigroups $\pert(\toep_n)$ and $\pert^+(\toep_n)$.
We first need to recall the definition of the vectorization of a matrix as is defined in \cite{MR4383114}.

\begin{Definition}[{\cite[Section 2]{MR4383114}}]
Let $T\in M_{n\times m}(\mathbb{C})$,
we define the vectorization $\vct(T)$ of $T$ as
\[
\begin{aligned}
\vct\colon\ M_{n\times m}(\mathbb{C})
&\to\mathbb{C}^{nm},
\\
T&\mapsto
\sum_{j=1}^me_j^{(m)}\otimes Te_{j}^{(m)},
\end{aligned}
\]
here the tensor notation is in the sense of Kronecker product,
corresponding to the standard identification $\mathbb{C}^{nm}\cong \mathbb{C}^m\otimes \mathbb{C}^n$,
and $e_j^{(m)}$ denotes the $j-$th basis element in $\mathbb{C}^m$,
i.e.,
$e_j^{(m)}=(0,\ldots, 1,\ldots, 0)^{\rm T}$ with the $j$-th entry is equal to 1 and $0$'s otherwise.
\end{Definition}
For example,
if $T=(t_{ij})_{1\leq i,j\leq 3}\in M_3(\mathbb{C})$,
then
\[
\vct\colon\
\begin{pmatrix}
t_{11} & t_{12} & t_{13}\\
t_{21} & t_{22} & t_{23}\\
t_{31} & t_{32} & t_{33}
\end{pmatrix}
\mapsto
(t_{11}, t_{21}, t_{31}, t_{12}, t_{22}, t_{32}, t_{13}, t_{23}, t_{33})^{\rm T}.
\]

\begin{Remark}
As it is shown in \cite[Section 2]{MR4383114} we have the formula
\[
\vct\big(A\,X\,B^{\rm T}\big)=(B\otimes A)\vct(X),\qquad
A\in M_{n\times m}(\mathbb{C}),\quad
B\in M_{k\times l}(\mathbb{C}),\quad
X\in M_{m\times l}(\mathbb{C}).
\]
\end{Remark}

We take a matrix $\Delta\in M_{n^2\times (2n-1)}(\mathbb{C})$ as
\[
\Delta
=
\big( \vct(\tau_{-n+1}), \vct(\tau_{-n+2}),\ldots, \vct(\tau_{0}), \vct(\tau_{1}),\vct(\tau_{2}),\ldots, \vct(\tau_{n-1})\big).
\]
Consider the semigroup homomorphism $\Phi\colon \pert(\toep_n)\to \hu(\toep_n)$ as is defined in Section \ref{sec_pert}.
We denote the image of $\omega\in \pert(\toep_n)$ by $\varphi$,
i.e.,
$\varphi=\Phi(\omega)\in\hu(\toep_n)$.
Take $\{\tau_i\}_{-n+1\leq i\leq n-1}$ as the basis of $\toep_n$,
we can identify $\varphi$ with a $(2n-1)\times (2n-1)$ matrix $W=(w_{ij})_{-n+1\leq i,j\leq n-1}$ such that
\begin{equation}\label{eq_Phi_W}
\varphi(\tau_j)=\sum\limits_{i=-n+1}^{n-1}w_{ij}\tau_{i}.
\end{equation}

If we regard the tensor product in the definition of $\pert(\toep_n)$ as Kronecker product,
we can then treat an element $\omega\in\pert(\toep_n)$ as a $n^2\times n^2$ matrix,
which we still denote as $\omega$ without confusion.
In the case of Toeplitz operator system $\toep_n$,
the $C^*$-algebra generated by $\toep_n$ is $M_n(\mathbb{C})$.
The opposite algebra $M_n(\mathbb{C})^\circ$ is the transposition of $M_n(\mathbb{C})$,
and an element $a^\circ\in M_n(\mathbb{C})^\circ$ is just equal to $a^{\rm T}$.
The relationship between $\omega$ and $\varphi$ is described in the following proposition.
\begin{Proposition}\label{prop_toep_herm}
Let $\omega\in \pert(\toep_n)$,
then we have the equation
\begin{equation}\label{eq_omega_Delta}
\omega\Delta = \Delta \overline{W},
\end{equation}
here $W\in M_{2n-1}(\mathbb{C})$ is the square matrix associated with $\Phi(\omega)=\varphi\in\hu(\toep_n)$ defined by equation \eqref{eq_Phi_W},
and $\overline{W}$ denotes the elementwise complex conjugation of $W$.
\end{Proposition}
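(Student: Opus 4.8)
The plan is to realise both $\omega$ and the induced map $\varphi=\Phi(\omega)$ as matrices acting on the vectorisations $\vct(\tau_j)$ of the basis elements, and then to match them.

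First I would record the matrix form of $\omega$. Regarding the tensor product as a Kronecker product and recalling that an element $b^\circ$ of $M_n(\mathbb{C})^\circ$ is the matrix $b^{\rm T}$, the $n^2\times n^2$ matrix of $\omega=\sum_i a_i\otimes b_i^\circ$ is $\sum_i a_i\otimes b_i^{\rm T}$. Condition $(3)$ of Definition~\ref{def_pert} also lets me rewrite $\omega=\sum_i b_i^*\otimes a_i^{*\circ}$, whose matrix is $\sum_i b_i^*\otimes\overline{a_i}$ because $(a_i^*)^{\rm T}=\overline{a_i}$. Thus, as matrices, $\sum_i a_i\otimes b_i^{\rm T}=\sum_i b_i^*\otimes\overline{a_i}$; this identity is used at the very end.

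Next I would compute $\vct(\varphi(\tau_j))$ for each basis element. Since $\varphi(\tau_j)=\sum_i a_i\tau_j b_i=\sum_i a_i\tau_j(b_i^{\rm T})^{\rm T}$, the vectorisation identity from the preceding Remark gives $\vct(\varphi(\tau_j))=\big(\sum_i b_i^{\rm T}\otimes a_i\big)\vct(\tau_j)$. Set $M:=\sum_i b_i^{\rm T}\otimes a_i\in M_{n^2}(\mathbb{C})$. On the other hand $\varphi=\Phi(\omega)\in\hu(\toep_n)$ by Proposition~\ref{prop_pt_hu}, so $\varphi(\tau_j)\in\toep_n$ and \eqref{eq_Phi_W} applies: $\vct(\varphi(\tau_j))=\sum_i w_{ij}\vct(\tau_i)$, which is precisely the $j$-th column of $\Delta W$. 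Comparing the two expressions for $\vct(\varphi(\tau_j))$ column by column over $j\in[-n+1,n-1]$ yields $M\Delta=\Delta W$.

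Finally I would relate $M$ to $\omega$ and conjugate. Because complex conjugation acts entrywise on a Kronecker product and $\overline{b^{\rm T}}=b^*$, one has $\overline{M}=\sum_i b_i^*\otimes\overline{a_i}$, which by the first paragraph is exactly the matrix of $\omega$. Conjugating $M\Delta=\Delta W$ gives $\overline{M}\,\overline{\Delta}=\overline{\Delta}\,\overline{W}$; but every $\tau_j$ has entries in $\{0,1\}$, hence $\Delta$ is real and $\overline{\Delta}=\Delta$, so $\omega\Delta=\overline{M}\Delta=\Delta\overline{W}$, which is \eqref{eq_omega_Delta}. The step I expect to be the main obstacle is the bookkeeping in the middle two paragraphs: keeping track of which matrix occupies which Kronecker slot, and of how the passage to the opposite algebra (a transpose), the column-stacking convention behind $\vct$ (which is what forces $b_i^{\rm T}$ rather than $a_i$ into the left factor of $M$), and the entrywise conjugation interact. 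In particular, the appearance of $\overline{W}$ rather than $W$ on the right-hand side is exactly the combined effect of the Hermiticity condition $(3)$ and the reality of $\Delta$, and placing this conjugate correctly is the only genuinely delicate point.
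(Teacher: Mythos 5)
Your proof is correct. It reaches \eqref{eq_omega_Delta} by a route that is organized differently from the paper's: the paper computes the $j$-th column of $\omega\Delta$ directly, using $\vct\big(AXB^{\rm T}\big)=(B\otimes A)\vct(X)$ to get $\vct\big(\varphi(\tau_{-j})^{\rm T}\big)$, and then extracts the conjugate on $W$ from the Hermiticity of $\varphi$ in the form of the index symmetry $w_{ij}=\overline{w_{-i,-j}}$ (which requires the observations $\tau_j^{\rm T}=\tau_{-j}$ and the reindexing $i\mapsto -i$). You instead first write down the vectorization matrix $M=\sum_i b_i^{\rm T}\otimes a_i$ of $\varphi$ itself, obtain $M\Delta=\Delta W$ with no Hermiticity input at all, and then inject condition $(3)$ at the level of Kronecker matrices to identify $\omega=\overline{M}$, finishing by conjugating and using that $\Delta$ is real. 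Your version buys a cleaner separation of the two ingredients (the purely linear-algebraic identity $M\Delta=\Delta W$ versus the self-adjointness constraint), and it avoids the diagonal-flipping bookkeeping entirely; the paper's version has the mild advantage of never leaving the single matrix $\omega$ and of making explicit the useful intermediate fact $w_{ij}=\overline{w_{-i,-j}}$, which the surrounding examples rely on. All the delicate points you flag (which Kronecker slot each factor occupies, $\overline{b^{\rm T}}=b^*$, $\overline{\Delta}=\Delta$) are handled correctly.
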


\begin{proof}
Let $\omega=\sum a_k\otimes b_k^{\rm T}\in \pert(\toep_n)$.
We observe that for $-n+1\leq j\leq n-1$,
the $j-$th column of $\omega\Delta$ is equal to
\[
\sum_i a_i\otimes b_i^{\rm T}(\vct(\tau_j))
=
\vct\bigg(\sum_i b_i^{\rm T}\tau_j a^{\rm T}_i\bigg)
=
\vct\bigg(\sum_i(a_i\tau_{-j}b_i)^{\rm T}\bigg)
=
\vct\big(\varphi(\tau_{-j})^{\rm T}\big).
\]
The equation \eqref{eq_Phi_W} implies that
\[
\vct\big(\varphi(\tau_{-j})^{\rm T}\big)=
\sum_{i=-n+1}^{n-1}w_{i,-j}\vct(\tau_{-i})
=\sum_{i=-n+1}^{n-1}w_{-i,-j}\vct(\tau_{i}).
\]
Since $\varphi$ is a Hermitian map,
we conclude that $w_{ij}=\overline{w_{-i,-j}}$.
Indeed, we observe that
\[
\varphi(\tau_j)=\varphi(\tau_{-j})^*
\Rightarrow
\sum w_{ij}\tau_i = \sum \overline{w_{-i,-j}}\tau_i
\Rightarrow
w_{ij}=\overline{w_{-i,-j}},
\]
hence
\begin{equation}\label{eq_j_column_pert}
\vct\big(\varphi(\tau_{-j})^{\rm T}\big)=
\sum_{i=-n+1}^{n-1}\overline{w_{ij}}\vct(\tau_i),
\end{equation}
notice that the l.h.s.\ of \eqref{eq_j_column_pert} is the $j$-th column of $\omega\Delta$,
and the r.h.s.\ of \eqref{eq_j_column_pert} is the $j$-th column of $\Delta\overline{W}$ for $-n+1\leq j \leq n-1$,
therefore we obtain the equation \eqref{eq_omega_Delta}.
\end{proof}

\begin{Remark}
To simplify the expression we count the rows and columns of the $(2n-1)\times (2n-1)$ matrix $W$ from $-n+1$ to $n-1$.
Since $\varphi$ is a unital map,
i.e.,
$\varphi(\tau_0)=\tau_0$,
the $0-$th column of~$W$ is $(0,\dots , 0,1,0,\dots , 0)^{\rm T}$ with $1$ in the central entry and $0$'s elsewhere.
\end{Remark}

\begin{Remark}
It is not difficult to show that $\rk(\Delta)=2n-1$ by a direct computation,
hence for each $\omega\in\pert(\toep_n)$ there is a unique $(2n-1)\times(2n-1)$ matrix $W$ satisfying the equation~\eqref{eq_omega_Delta}.
Especially,
we have that $\omega\Delta=\Delta$ if and only if $\Phi(\omega)=\id\in \ucp(\toep_n)$.
\end{Remark}

The matrix $\omega\in M_{n^2}(\mathbb{C})$ is not Hermitian in general.
However,
in \cite{MR599841} it is shown that we can transform $\omega$ to become a Hermitian matrix.

\begin{Definition}[{\cite[Section 1]{MR599841}}]
Let $T=(t_{ij})_{1\leq i, j\leq n^2}\in M_{n^2}(\mathbb{C})$,
we may write $T$ in the block form as $T=(T_{ij})_{1\leq i,j\leq n}$,
where $T_{ij}=(t_{rs}^{ij})_{1\leq r,s\leq n}\in M_n(\mathbb{C})$.
We define $\Gamma\colon M_{n^2}(\mathbb{C})\to M_n(M_n(\mathbb{C}))$ as follows:
\[
\Gamma(T)_{rs}^{ij}=t_{[i,j],[r,s]},\qquad
i,j,r,s = 1,\ldots, n,
\]
here $[i,j]=(i-1)n+j$.
\end{Definition}

That is to say,
we rearrange each row in $T\in M_{n^2}(\mathbb{C})$ to become a new block and then reorder all blocks together.
For example,
for n=2,
\[
T=
\begin{pmatrix}
t_{11} & t_{12} & t_{13} & t_{14}\\
t_{21} & t_{22} & t_{23} & t_{24}\\
t_{31} & t_{32} & t_{33} & t_{34}\\
t_{41} & t_{42} & t_{43} & t_{44}\\
\end{pmatrix}\!,
\qquad\textrm{and}\qquad
\Gamma(T)
=
\begin{pmatrix}
t_{11} & t_{12} & t_{21} & t_{22}\\
t_{13} & t_{14} & t_{23} & t_{24}\\
t_{31} & t_{32} & t_{41} & t_{42}\\
t_{33} & t_{34} & t_{43} & t_{44}\\
\end{pmatrix}\!.
\]

\begin{Theorem}[{\cite[Theorems 1 and~2]{MR599841}}]
Let $\mathcal{T}\colon M_n(\mathbb{C})\to M_n(\mathbb{C})$ be a linear map,
$\langle \mathcal{T}\rangle$ be the matrix representation of $\mathcal{T}$ with respect to the unit matrices $E_{i,j}$.
The following are equivalent:
\begin{itemize}
 \item $\mathcal{T}\colon M_n(\mathbb{C})\to M_n(\mathbb{C})$ is completely positive \emph(resp.~Hermitian-preserving\emph).
 \item There exist $A_1,\dots ,A_s\in M_n(\mathbb{C})$ such that $\langle \mathcal{T}\rangle=\sum_{i=1}^s A_i\otimes \overline{A}_i$ \emph(resp.~$\langle \mathcal{T}\rangle=\sum_{i=1}^s \epsilon_i A_i\otimes \overline{A}_i$ for $\epsilon_1,\dots ,\epsilon_s\in\{\pm 1\}$\emph).
 \item There exist $A_1, \dots , A_s\in M_n(\mathbb{C})$ and a $s\times s$ positive semidefinite \emph(resp.~Hermitian\emph) matrix~$(d_{ij})$ such that $\langle \mathcal{T}\rangle=\sum_{i,j=1}^sd_{ij}A_i\otimes \overline{A}_j$.
 \item $\Gamma(\langle \mathcal{T}\rangle)$ is positive semidefinite \emph(resp.~Hermitian\emph).
 \item $\Gamma\big(\langle \mathcal{T}\rangle^{\rm T}\big)$ is positive semidefinite \emph(resp.~Hermitian\emph).
\end{itemize}
\end{Theorem}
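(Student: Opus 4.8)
The plan is to route everything through Choi's characterization of completely positive maps, after first identifying $\Gamma(\langle\mathcal{T}\rangle)$ with the Choi matrix. Set $C_{\mathcal{T}}:=\sum_{i,j=1}^{n}E_{ij}\otimes\mathcal{T}(E_{ij})=(\id\otimes\mathcal{T})\bigl(\sum_{i,j}E_{ij}\otimes E_{ij}\bigr)\in M_n\bigl(M_n(\mathbb{C})\bigr)$, and let $F$ be the flip $u\otimes v\mapsto v\otimes u$ on $\mathbb{C}^n\otimes\mathbb{C}^n$, a self-adjoint permutation matrix. Writing out the definitions of $\langle\mathcal{T}\rangle$, of $\Gamma$, and of $[i,j]=(i-1)n+j$ in coordinates, one checks that the $\bigl([i,r],[j,s]\bigr)$-entry of $\Gamma(\langle\mathcal{T}\rangle)$ equals the $\bigl([r,i],[s,j]\bigr)$-entry of $C_{\mathcal{T}}$, i.e.\ $\Gamma(\langle\mathcal{T}\rangle)=F\,C_{\mathcal{T}}\,F$. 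Hence $\Gamma(\langle\mathcal{T}\rangle)$ is positive semidefinite (resp.\ Hermitian) if and only if $C_{\mathcal{T}}$ is, and it is enough to prove the chain of equivalences with $C_{\mathcal{T}}$ in place of $\Gamma(\langle\mathcal{T}\rangle)$ in $(4)$.

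For $(1)\Leftrightarrow(4)$: if $\mathcal{T}$ is completely positive, then $\sum_{ij}E_{ij}\otimes E_{ij}$ is positive in the second leg of $M_n(M_n(\mathbb{C}))$, so $C_{\mathcal{T}}=(\id\otimes\mathcal{T})\bigl(\sum_{ij}E_{ij}\otimes E_{ij}\bigr)\ge 0$; conversely, from $C_{\mathcal{T}}\ge 0$ write $C_{\mathcal{T}}=\sum_k v_kv_k^*$ with $v_k\in\mathbb{C}^n\otimes\mathbb{C}^n$, reshape each $v_k$ to a matrix $A_k\in M_n(\mathbb{C})$ (the inverse of $\vct$), and check $\mathcal{T}(X)=\sum_k A_kXA_k^*$, a sum of the completely positive maps $X\mapsto A_kXA_k^*$. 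In the Hermitian-preserving case, $\mathcal{T}$ maps Hermitians to Hermitians iff $C_{\mathcal{T}}^*=C_{\mathcal{T}}$ (use $\mathcal{T}(E_{ij})^*=\mathcal{T}(E_{ji})$), and a Hermitian $C_{\mathcal{T}}$ has spectral decomposition $\sum_k\epsilon_k v_kv_k^*$ with $\epsilon_k\in\{\pm1\}$, yielding $\mathcal{T}(X)=\sum_k\epsilon_kA_kXA_k^*$.

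The translation to $(2)$ and $(3)$ uses the identity $\vct\bigl(AXB^{\mathrm T}\bigr)=(B\otimes A)\vct(X)$ from the Remark above: with $B=\overline{A_k}$ it turns $\mathcal{T}(X)=\sum_k\epsilon_kA_kX\overline{A_k}^{\mathrm T}$ into $\langle\mathcal{T}\rangle=\sum_k\epsilon_k\,\overline{A_k}\otimes A_k$, which is of the form in $(2)$ after relabeling $A_k\mapsto\overline{A_k}$; this gives $(4)\Leftrightarrow(2)$. Then $(2)\Rightarrow(3)$ is immediate with $(d_{ij})$ diagonal with entries $\epsilon_k$, and $(3)\Rightarrow(2)$ follows by diagonalizing the coefficient matrix $D=(d_{ij})$: if $D$ is positive semidefinite (resp.\ Hermitian), write $D=\sum_m\mu_m x_mx_m^*$ with $\mu_m\ge 0$ (resp.\ $\mu_m\in\mathbb{R}$) and set $B_m:=\sum_i(\overline{x_m})_iA_i$, so that $\sum_{ij}d_{ij}A_i\otimes\overline{A_j}=\sum_m\mu_mB_m\otimes\overline{B_m}$, which is of the shape in $(2)$ after absorbing $|\mu_m|^{1/2}$ into $B_m$. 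Alternatively, applying $\Gamma$ and using that $\Gamma$ carries a simple tensor $A\otimes\overline{B}$ to the rank-one matrix $\vct(A)\,\overline{\vct(B)}^{\mathrm T}$ turns the expansion in $(3)$ into $\Gamma(\langle\mathcal{T}\rangle)=VDV^*$ with $V=(\vct(A_1),\dots,\vct(A_s))$, positive semidefinite (resp.\ Hermitian) exactly when $D$ is, which is $(3)\Leftrightarrow(4)$ directly. Finally $(4)\Leftrightarrow(5)$ because transposition preserves the shape in $(2)$: from $(A\otimes\overline{A})^{\mathrm T}=A^{\mathrm T}\otimes\overline{A^{\mathrm T}}$, an expansion $\langle\mathcal{T}\rangle=\sum_k\epsilon_kA_k\otimes\overline{A_k}$ transposes to $\langle\mathcal{T}\rangle^{\mathrm T}=\sum_k\epsilon_kA_k^{\mathrm T}\otimes\overline{A_k^{\mathrm T}}$ of the same type, so by $(4)\Leftrightarrow(2)$ the matrix $\Gamma(\langle\mathcal{T}\rangle^{\mathrm T})$ is positive semidefinite (resp.\ Hermitian) iff $\Gamma(\langle\mathcal{T}\rangle)$ is.

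The only genuinely delicate part is the coordinate bookkeeping in the first paragraph and in the realignment identity $\Gamma(A\otimes\overline{B})=\vct(A)\,\overline{\vct(B)}^{\mathrm T}$: one has to pin down the row/column conventions for $\langle\mathcal{T}\rangle$, for $\Gamma$ (via $[i,j]=(i-1)n+j$), and for $\vct$ so that these identities hold exactly rather than up to an extra transpose or flip. After that the argument is routine linear algebra --- spectral decompositions of positive semidefinite and Hermitian matrices, the bilinearity of $\Gamma$, and the stability of complete positivity under sums and under $X\mapsto AXA^*$.
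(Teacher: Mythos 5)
This theorem is not proved in the paper at all: it is imported verbatim from Poluikis and Hill \cite[Theorems~1 and~2]{MR599841}, so there is no in-paper argument to compare against. Your proposal is a correct, self-contained derivation, and it follows the modern standard route: identify $\Gamma(\langle\mathcal{T}\rangle)$ with the Choi matrix $C_{\mathcal{T}}=\sum_{i,j}E_{ij}\otimes\mathcal{T}(E_{ij})$ up to conjugation by the flip, invoke Choi's theorem for $(1)\Leftrightarrow(4)$, and translate between $(2)$, $(3)$, $(4)$ by the rank-one realignment identity and spectral decomposition of the coefficient matrix $D$. The original Poluikis--Hill argument works directly with the rearrangement $\Gamma$ and Kronecker expansions rather than packaging everything through the Choi matrix, but the mathematical content is the same. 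Two small points. First, with the paper's column-stacking convention $\vct(T)=\sum_j e_j\otimes Te_j$ one gets $\Gamma(A\otimes\overline{B})=\vct\big(A^{\rm T}\big)\,\overline{\vct\big(B^{\rm T}\big)}^{\rm T}$ rather than the version without the inner transposes, and likewise $\Gamma(\langle\mathcal{T}\rangle)$ may equal $C_{\mathcal{T}}$ only up to a transpose and a flip depending on whether $\langle\mathcal{T}\rangle$ acts on row- or column-vectorizations; you flag this explicitly, and it is harmless because transposition and conjugation by a permutation preserve both Hermiticity and positive semidefiniteness (indeed items $(4)$ and $(5)$ of the theorem exist precisely to absorb this ambiguity). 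Second, in the Hermitian case your reduction of ``$\mathcal{T}$ Hermitian-preserving'' to ``$\mathcal{T}(X^*)=\mathcal{T}(X)^*$ for all $X$'' silently uses the decomposition $X=H+{\rm i}K$ into Hermitian parts; worth one line. Otherwise the argument is complete and correct.
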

In our case,
we notice that if we regard $\omega$ as a matrix in $M_{n^2}(\mathbb{C})$,
then $\omega$ plays the role of~$\langle \mathcal{T}\rangle$ above.
Hence we have the following result:
\begin{Theorem}\label{thm_main}
If $\omega=\sum a_i\otimes b_i^\circ\in \pert(\toep_n)$ \emph(resp.~$\pert^+(\toep_n)$\emph),
then we have
\begin{itemize}
 \item $\Gamma(\omega)$ is a Hermitian \emph(resp.~positive semidefinite\emph) $n^2\times n^2$ matrix,
 \item $\varphi=\Phi(\omega)$ can be extended as a Hermitian-preserving \emph(resp.~completely positive\emph) map from $M_n(\mathbb{C})$ to $M_n(\mathbb{C})$,
 where $\Phi$ is defined as $\Phi(\omega)\colon X\mapsto \sum a_i (X) b_i$ for $X\in M_n(\mathbb{C})$,
 and hence we obtain the following two semigroup homomorphisms:
\begin{equation*}
\begin{aligned}
 \pert(\toep_n)\ \, &\xlongrightarrow{\Phi}\hu(\toep_n),
 \\
 \pert^+(\toep_n) &\xlongrightarrow{\Phi}\ucp(\toep_n).
\end{aligned}
\end{equation*}
\end{itemize}
\end{Theorem}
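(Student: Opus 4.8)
The plan is to reduce the statement to the quoted theorem of de Pillis \cite{MR599841} by recognising $\omega$, viewed as an $n^2\times n^2$ matrix, as the matrix representation of an explicit linear map on $M_n(\mathbb{C})$. Recall that $C^*(\toep_n)=M_n(\mathbb{C})$ and that $M_n(\mathbb{C})^\circ$ is identified with $M_n(\mathbb{C})$ via $a^\circ=a^{\rm T}$, so an element $\omega=\sum_i a_i\otimes b_i^\circ\in\pert(\toep_n)$ is literally the Kronecker-product matrix $\sum_i a_i\otimes b_i^{\rm T}\in M_{n^2}(\mathbb{C})$. I would set $\mathcal{T}(X):=\sum_i a_i X b_i$ for $X\in M_n(\mathbb{C})$; this is the image $\Psi(\omega)\in\cb(M_n(\mathbb{C}))$ under the linear map $\Psi\colon M_n(\mathbb{C})\otimes_{\alg}M_n(\mathbb{C})^\circ\to\cb(M_n(\mathbb{C}))$ of Section \ref{sec_pert} (equivalently, the map $\widetilde{\Phi}(\omega)$ appearing in the proof of Proposition \ref{prop_cbhu}, specialised to $\mathcal{H}=\mathbb{C}^n$), so it depends only on $\omega$ and not on the chosen representation, and it restricts to $\varphi=\Phi(\omega)$ on $\toep_n$. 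A short index computation with the matrix units $E_{ij}$ in the ordering $[i,j]=(i-1)n+j$ used in the de Pillis theorem then shows that $\langle\mathcal{T}\rangle=\sum_i a_i\otimes b_i^{\rm T}=\omega$; thus $\omega$ is precisely the object denoted $\langle\mathcal{T}\rangle$ there.

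For the Hermitian case, take $\omega\in\pert(\toep_n)$. Condition $(3)$ of Definition \ref{def_pert} reads $\sum_i a_i\otimes b_i^\circ=\sum_i b_i^*\otimes a_i^{*\circ}$ as elements of $M_n(\mathbb{C})\otimes_{\alg}M_n(\mathbb{C})^\circ$; applying the linear map $\Psi$ turns this into the operator identity $\sum_i a_i X b_i=\sum_i b_i^* X a_i^*$ for all $X\in M_n(\mathbb{C})$. Hence $\mathcal{T}(X^*)=\sum_i a_i X^* b_i=\big(\sum_i b_i^* X a_i^*\big)^*=\mathcal{T}(X)^*$, i.e.\ $\mathcal{T}$ is Hermitian-preserving on $M_n(\mathbb{C})$. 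The de Pillis theorem (equivalence of the first and fourth bullets, in the ``resp.'' reading) then gives that $\Gamma(\langle\mathcal{T}\rangle)=\Gamma(\omega)$ is Hermitian. For the positive case, an element $\omega=\sum_i a_i\otimes a_i^{*\circ}\in\pert^+(\toep_n)$ gives $\mathcal{T}(X)=\sum_i a_i X a_i^*$, which is in Kraus form and hence completely positive; the de Pillis theorem now yields that $\Gamma(\omega)$ is positive semidefinite. (Alternatively one checks directly that $\Gamma\big(\sum_i a_i\otimes\overline{a_i}\big)$ is the associated Choi matrix $\sum_i v_iv_i^*$ and so is manifestly positive semidefinite.) This proves the first bullet.

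The second bullet is then immediate: $\mathcal{T}$ is the required extension of $\varphi=\Phi(\omega)$ to $M_n(\mathbb{C})$, and it is Hermitian-preserving when $\omega\in\pert(\toep_n)$ and completely positive when $\omega\in\pert^+(\toep_n)$ by the previous paragraph; meanwhile the two displayed semigroup homomorphisms $\pert(\toep_n)\to\hu(\toep_n)$ and $\pert^+(\toep_n)\to\ucp(\toep_n)$ are nothing but the restrictions of $\Phi$ already constructed in Section \ref{sec_pert} (Proposition \ref{prop_pt_hu} and the Corollary following it), so no additional argument is needed there.

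I expect the only genuinely delicate point to be the bookkeeping behind the identification $\omega=\langle\mathcal{T}\rangle$: one must match the tensor/vectorisation conventions used for $\pert$ — where the $M_n(\mathbb{C})^\circ$-leg appears as a transpose — with the row-major ordering $[i,j]=(i-1)n+j$ underlying de Pillis's maps $\Gamma$ and $\langle\,\cdot\,\rangle$. This is routine but error-prone; it is reassuring that de Pillis's list of equivalences already contains both the ``$\Gamma(\langle\mathcal{T}\rangle)$'' and the ``$\Gamma(\langle\mathcal{T}\rangle^{\rm T})$'' versions, so the conclusion survives a possible stray transpose in that identification.
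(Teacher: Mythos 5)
Your proposal is correct and follows essentially the same route as the paper, which itself gives no formal proof beyond the observation that $\omega$, viewed as a Kronecker-product matrix, plays the role of $\langle\mathcal{T}\rangle$ in the quoted equivalence; you usefully fill in the two points the paper leaves implicit, namely that condition $(3)$ of Definition \ref{def_pert} forces $\mathcal{T}$ to be Hermitian-preserving on all of $M_n(\mathbb{C})$ and that the $\pert^+$ case is in Kraus form. The only slip is attributional: the cited result \cite{MR599841} is due to Poluikis and Hill, not de Pillis, though this does not affect the argument.
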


\begin{Example}
We now characterize the semigroup $\pert(\toep_2)$ and $\pert^+(\toep_2)$.
Since the basis of $\toep_2$ is $\{\tau_{-1}, \tau_{0},\tau_{1}\}$,
we take
\[
\Delta=(\vct(\tau_{-1}),\vct(\tau_{0}),\vct(\tau_{1}))
=
\begin{pmatrix}
0 & 1 & 0\\
1 & 0 & 0\\
0 & 0 & 1\\
0 & 1 & 0\\
\end{pmatrix}\!.
\]
Let $\varphi\in \hu(\toep_2)$,
then $\varphi$ is determined by a $3\times 3$ matrix
\[
W=\begin{pmatrix}
a & 0 & \overline{c}\\
b & 1 & \overline{b}\\
c & 0 & \overline{a}\\
\end{pmatrix}
\in M_3(\mathbb{C})
\]
given by equation \eqref{eq_Phi_W},
more explicitly,
\[
\begin{aligned}
\varphi\colon \ \toep_2\,\,\,\,
&\to\,\,\,
\toep_2,\\
\begin{pmatrix}
0 & 0\\
1 & 0
\end{pmatrix}
&\mapsto
\begin{pmatrix}
b & c \\
a & b
\end{pmatrix}\!,\\
\begin{pmatrix}
1 & 0\\
0 & 1
\end{pmatrix}
&\mapsto
\begin{pmatrix}
1 & 0 \\
0 & 1
\end{pmatrix}\!,\\
\begin{pmatrix}
0 & 1\\
0 & 0
\end{pmatrix}
&\mapsto
\begin{pmatrix}
\overline{b} & \overline{a} \\
\overline{c} & \overline{b}
\end{pmatrix}\!.
\end{aligned}
\]

Let
\[
T=\begin{pmatrix}
0 & 1 & 0 & 0\\
1 & 0 & 0 & 0\\
0 & 0 & 1 & 0\\
0 & 1 & 0 & 1\\
\end{pmatrix}\!,\qquad
I=\begin{pmatrix}
1 & 0 & 0\\
0 & 1 & 0\\
0 & 0 & 1\\
0 & 0 & 0\\
\end{pmatrix}\!,
\]
a direct calculation shows that $\Delta=T\,I$.
Let $\omega$ be an element in $\pert(\toep_n)$ such that $\Phi(\omega)=\varphi$,
the Proposition \ref{prop_toep_herm} implies that
\[
T^{-1}\,\omega\, T\, I=I\,\overline{W},
\]
thus $T^{-1}\,\omega\,T$ can be expressed as
\[
T^{-1}\,\omega\,T
=
\begin{pmatrix}
\overline{a} & 0 & c & z_1\\
\overline{b} & 1 & b & z_2\\
\overline{c} & 0 & a & z_3\\
0 & 0 & 0 & z_4\\
\end{pmatrix}
\]
for some $z_1,\ldots, z_4\in \mathbb{C}$,
and therefore
\[
\omega=
\begin{pmatrix}
 1-z_2 & \overline{b} & b & z_2 \\
 -z_1 & \overline{a} & c & z_1 \\
 -z_3 & \overline{c} & a & z_3 \\
 -z_2-z_4+1 & \overline{b} & b & z_2+z_4 \\
\end{pmatrix}\!,
\quad
\Gamma(\omega)=
\begin{pmatrix}
 1-z_2 & \overline{b} & -z_1 & \overline{a} \\
 b & z_2 & c & z_1 \\
 -z_3 & \overline{c} & -z_2-z_4+1 & \bar{b} \\
 a & z_3 & b & z_2+z_4 \\
\end{pmatrix}\!.
\]
According to Theorem \ref{thm_main},
 $\Gamma(\omega)$ is a Hermitian matrix;
thus we must have $z_2, z_4\in \mathbb{R}$ and $z_3=\overline{z_1}$.
Hence $\omega\in \pert(\toep_2)$ if and only if $\omega$ and $\Gamma(\omega)$ are of the forms
\[
\omega
=
\begin{pmatrix}
1-z_2 & \overline{b} & b & z_2\\
-z_1 & \overline{a} & c & z_1\\
-\overline{z}_1 & \overline{c} & a & \overline{z}_1\\
1-z_2-z_4 & \overline{b} & b & z_2+z_4\\
\end{pmatrix}\!,\qquad
\Gamma(\omega)
=
\begin{pmatrix}
 1-z_2 & \overline{b} & -z_1 & \overline{a} \\
 b & z_2 & c & z_1 \\
 -\overline{z}_1 & \overline{c} & -z_2-z_4+1 & \bar{b} \\
 a & \overline{z}_1 & b & z_2+z_4 \\
\end{pmatrix}
\]
with $z_2, z_4\in \mathbb{R}$ and $z_1\in \mathbb{C}$.
Moreover,
if $\Gamma(\omega)$ is positive semidefinite then $\omega\in \pert^+(\toep_n)$.
\end{Example}

We also obtain the positive definite matrix $\Gamma(\omega)$:
\[
\Gamma(\omega)
=
\begin{pmatrix}
1-z_2 & 0 & -z_1 & 1\\
0 & z_2 & 0 & z_1\\
-\overline{z_1} & 0 & 1-z_2-z_4 & 0\\
1 & \overline{z_1} & 0 & z_2+z_4\\
\end{pmatrix}\!.
\]

In the case of Toeplitz system,
since the $C^*$-algebra generated by $\toep_n$ is $M_n(\mathbb{C})$,
which is a nuclear $C^*$-algebra,
and since the Haagerup tensor norm is a $C^*$-cross norm \cite[Corollary 2.2]{MR891774},
we conclude that $\|\omega\|=\|\omega\|_h$ for an element $\omega\in \pert(\toep_n)$.
According to Proposition \ref{prop_ucp},
for $\omega\in \pert^+(\toep_n)$
we have $\|\omega\|=1$ .
We then obtain the following proposition.

\begin{Proposition}
Let $\varphi\in \hu(\toep_n)$,
$W\in M_{2n-1}(\mathbb{C})$ be the corresponding matrix,
and $\Delta=\big(\vct(\tau_{i})\big)_{-n+1\leq i\leq n-1}\in M_{n^2\times (2n-1)}(\mathbb{C})$.
A necessary condition for $\varphi\in \ucp(\toep_n)$ is that
$\big\|\Delta \overline{W}\big\|\leq \|\Delta\|$.
\end{Proposition}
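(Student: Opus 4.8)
The plan is to deduce the estimate from the norm-one bound of Proposition \ref{prop_ucp}, after first exhibiting a concrete preimage of $\varphi$ inside $\pert^+(\toep_n)$.

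First I would assume $\varphi\in\ucp(\toep_n)$ and construct $\omega\in\pert^+(\toep_n)$ with $\Phi(\omega)=\varphi$. Viewing $\varphi$ as a UCP map from $\toep_n$ into $\toep_n\subset M_n(\mathbb{C})$, Arveson's extension theorem yields a UCP map $\widetilde\varphi\colon M_n(\mathbb{C})\to M_n(\mathbb{C})$ with $\widetilde\varphi\big|_{\toep_n}=\varphi$. By Choi's theorem --- equivalently the normal Kraus decomposition recalled in Section \ref{sec_gauge}, which applies since $M_n(\mathbb{C})$ is finite-dimensional --- there are $a_1,\dots,a_s\in M_n(\mathbb{C})$ with $\widetilde\varphi(\cdot)=\sum_i a_i(\cdot)a_i^*$ and $\sum_i a_i a_i^*=\id$. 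Set $\omega=\sum_i a_i\otimes a_i^{*\circ}\in M_n(\mathbb{C})\otimes M_n(\mathbb{C})^\circ$. Conditions $(1)$ and $(3)$ of Definition \ref{def_pert} hold by construction, and condition $(2)$ holds because $\sum_i a_i\,\toep_n\,a_i^*=\widetilde\varphi(\toep_n)=\varphi(\toep_n)\subseteq\toep_n$; hence $\omega\in\pert^+(\toep_n)$, and by the very definition of $\Phi$ we get $\Phi(\omega)=\varphi$. Note that only the existence of one preimage of the given $\varphi$ is needed here, not surjectivity of $\Phi$.

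Next I would invoke Proposition \ref{prop_toep_herm} for this $\omega$: with $W$ the $(2n-1)\times(2n-1)$ matrix attached to $\varphi=\Phi(\omega)$ via \eqref{eq_Phi_W}, it gives the matrix identity $\omega\Delta=\Delta\overline{W}$. Regarding $\omega$ as an element of $M_{n^2}(\mathbb{C})$ and using submultiplicativity of the operator norm, $\big\|\Delta\overline{W}\big\|=\|\omega\Delta\|\le\|\omega\|\,\|\Delta\|$. Finally I would bound $\|\omega\|$: since $C^*(\toep_n)=M_n(\mathbb{C})$ is nuclear and the Haagerup tensor norm is a $C^*$-cross norm, $\|\omega\|=\|\omega\|_h$, and since $\omega\in\pert^+(\toep_n)\subset\overline{\pert^+(\toep_n)}$, Proposition \ref{prop_ucp} gives $\|\omega\|_h=1$. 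Combining the last two displays yields $\big\|\Delta\overline{W}\big\|\le\|\Delta\|$.

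The only real content --- and the step I expect to be the main obstacle --- is the first one: passing from the abstract operator-system UCP map $\varphi$ on $\toep_n$ to an honest Kraus-type decomposition by matrices in $M_n(\mathbb{C})$ that still carries $\toep_n$ into itself, so that the concrete identity $\omega\Delta=\Delta\overline W$ of Proposition \ref{prop_toep_herm} becomes available; once that is in place, the conclusion is a one-line norm estimate together with Proposition \ref{prop_ucp} and nuclearity of $M_n(\mathbb{C})$.
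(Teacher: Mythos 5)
Your proposal is correct and follows essentially the same route as the paper: extend $\varphi$ to a UCP map on $M_n(\mathbb{C})$ by Arveson, use the Choi/Kraus form to produce $\omega\in\pert^+(\toep_n)$ with $\Phi(\omega)=\varphi$, apply Proposition~\ref{prop_toep_herm} to get $\omega\Delta=\Delta\overline{W}$, and conclude via submultiplicativity together with $\|\omega\|=\|\omega\|_h=1$ from nuclearity and Proposition~\ref{prop_ucp}. Your write-up is in fact slightly more careful than the paper's, since you explicitly verify the three conditions of Definition~\ref{def_pert} for the constructed $\omega$.
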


\begin{proof}
If $\varphi\in \ucp(\toep_n)$,
i.e.,
the map $\toep_n \xlongrightarrow{\varphi}\toep_n$ is a UCP map,
according to Arveson's extension theorem~\cite{MR253059, MR1976867},
we can always extend $\varphi$ to a UCP map $\widetilde{\varphi}$ over $M_n(\mathbb{C})$,
i.e.,
$M_n(\mathbb{C})\xlongrightarrow{\widetilde{\varphi}}M_n(\mathbb{C})$,
and since any UCP map $\widetilde{\varphi}$ over $M_n(\mathbb{C})$ can be expressed as $\widetilde{\varphi}(X)=\sum V_i^* X V_i$ for finitely many $V_i\in M_n(\mathbb{C})$,
we can take $\omega=\sum V_i\otimes V_i^\circ$ such that $\Phi(\omega)=\varphi$.
By~Proposition \ref{prop_toep_herm} we have the equality $\omega \Delta=\Delta \overline{W}$.
Hence
\[
\big\|\Delta \overline{W}\big\|=\|\omega \Delta\|
\leq \|\omega\|\,\|\Delta\|,
\]
and since $\|\omega\|=1$,
we obtain that
$\big\|\Delta \overline{W}\big\|\leq \|\Delta\|.
$
\end{proof}

\appendix

\section{Operator systems}\label{apd_os}
This appendix contains some basic definitions and results about operator systems.
In our case we only consider the concrete operator systems,
i.e.,
$\mathcal{E}\subset B(\mathcal{H})$ for some Hilbert space $\mathcal{H}$.
We~refer the reader \cite{MR2111973, MR1793753, MR1976867} for more details about operator systems.
\begin{Definition}
Let $\mathcal{H}$ be a Hilbert space,
$B(\mathcal{H})$ be the set of all bounded operators on $\mathcal{H}$.
A concrete operator system is a (closed) linear subspace $\mathcal{E}$ of $B(\mathcal{H})$.
If $\mathcal{E}$ is closed under the involution,
i.e.,
$x\in \mathcal{E}$ implies $x^*\in \mathcal{E}$,
then $\mathcal{E}$ is called an operator system.
In this paper, we always assume the identity element $\id\in \mathcal{E}\subset B(\mathcal{H})$.
\end{Definition}
Let $\mathcal{H}^{(n)}$ be the direct sum of $n$ copies of $\mathcal{H}$,
$M_n(\mathcal{E})$ be the set of all $n\times n$ matrices with entries in $\mathcal{E}$.
Since we have the $C^*$-isomorphism $M_n(B(\mathcal{H}))\cong B\big(\mathcal{H}^{(n)}\big)$,
thus we can identify each element $(x_{ij})\in M_n(\mathcal{E})$ as an operator in $B\big(\mathcal{H}^{(n)}\big)$,
and $(x_{ij})$ inherits a norm $\|\cdot\|_n$ from $B\big(\mathcal{H}^{(n)}\big)$,
thus $M_n(\mathcal{E})$ turns out to be a normed vector space.

Let $\mathcal{E}\subset B(\mathcal{H})$ for be an operator system,
if there is a linear map $\varphi\colon \mathcal{E}\to\mathcal{E}$,
then we define $\varphi_n\colon M_n(\mathcal{E})\to M_n(\mathcal{E})$ by sending $(x_{ij})$ to $(\varphi(x_{ij}))$.
\begin{Definition}
Let $\mathcal{E}$ be an operator system,
$\varphi\colon \mathcal{E}\to \mathcal{E}$ be a linear map,
and $\varphi_n$ be the induced map $\varphi_n\colon M_n(\mathcal{E})\to M_n(\mathcal{E})$.
\begin{enumerate}
\item
The map $\varphi$ is called completely bounded if $\sup_{n>0}\|\varphi_n\| < \infty$,
and we set
\[
\|\varphi\|_{cb}=\sup_{n>0}\|\varphi_n\|.
\]
\item
The map $\varphi$ is called $n-$positive if $\varphi_n$ is positive,
and $\varphi$ is called completely positive if $\varphi_n$ is $n-$positive for all $n>0$.
\end{enumerate}
\end{Definition}

If a completely positive map $\varphi$ preserves the unit,
i.e.,
$\varphi(\id)=\id$,
then $\varphi$ is called a~UCP map(unital completely positive),
and we denote the collection of all UCP maps over $\mathcal{E}$ by $\ucp(\mathcal{E})$.

\begin{Theorem}[Arveson's extension theorem]
Let $\mathcal{A}$ be a $C^*$-algebra,
$\mathcal{E}$ an operator system contained in $\mathcal{A}$,
and $\varphi\colon \mathcal{E}\to B(\mathcal{H})$ a completely positive map.
Then there exists a completely positive map,
$\psi\colon \mathcal{A}\to B(\mathcal{H})$,
extending $\varphi$.
\end{Theorem}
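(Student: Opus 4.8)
The plan is to run the classical two-step argument: first settle the case of a finite-dimensional Hilbert space by converting complete positivity into positivity of a scalar functional and invoking the Hahn--Banach theorem, and then obtain the general case by a compactness argument over finite-rank compressions. We may assume throughout that $\mathcal{A}$ is unital and, after rescaling, that $\varphi$ is unital; the reduction to the unital case is standard (one adjoins units and extends $\varphi$ using the $\|\varphi\|_{cb}$-trick, see \cite[Chapter~3]{MR1976867}), and in the situations considered in this paper $\id\in\mathcal{E}$ so $\mathcal{A}=C^{*}(\mathcal{E})$ is already unital and only the rescaling is needed.

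\textbf{Finite-dimensional range.} Suppose first $\mathcal{H}=\mathbb{C}^{n}$, so $B(\mathcal{H})=M_{n}(\mathbb{C})$. To a linear map $\varphi\colon\mathcal{E}\to M_{n}(\mathbb{C})$ I would associate the linear functional $s_{\varphi}$ on $M_{n}(\mathcal{E})$ defined by $s_{\varphi}\big((a_{ij})\big)=\sum_{i,j}\langle\varphi(a_{ij})e_{j},e_{i}\rangle$. As is standard (see \cite[Chapter~6]{MR1976867}), $\varphi\mapsto s_{\varphi}$ is a bijection onto the linear functionals on $M_{n}(\mathcal{E})$, and $\varphi$ is completely positive if and only if $s_{\varphi}$ is a positive linear functional on the operator system $M_{n}(\mathcal{E})\subset M_{n}(\mathcal{A})$. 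Since a positive linear functional on a unital operator system is bounded with norm attained at the unit, Hahn--Banach extends $s_{\varphi}$, with the same norm, to a bounded functional on the unital $C^{*}$-algebra $M_{n}(\mathcal{A})$; and any norm-preserving extension of such a functional is again positive. Transporting the extended functional back through the correspondence yields a completely positive map $\psi\colon\mathcal{A}\to M_{n}(\mathbb{C})$ with $\psi|_{\mathcal{E}}=\varphi$.

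\textbf{General Hilbert space.} Next I would fix an increasing net $(P_{\alpha})$ of finite-rank projections on $\mathcal{H}$ with $P_{\alpha}\to\id$ strongly. For each $\alpha$ the map $x\mapsto P_{\alpha}\varphi(x)P_{\alpha}$ is completely positive from $\mathcal{E}$ into the corner $P_{\alpha}B(\mathcal{H})P_{\alpha}\cong M_{k_{\alpha}}(\mathbb{C})$, so the first step extends it to a completely positive $\psi_{\alpha}\colon\mathcal{A}\to P_{\alpha}B(\mathcal{H})P_{\alpha}\subset B(\mathcal{H})$. These maps are uniformly bounded, because a completely positive map attains its norm at the unit: $\|\psi_{\alpha}\|_{cb}=\|\psi_{\alpha}(\id)\|=\|P_{\alpha}\varphi(\id)P_{\alpha}\|\leq\|\varphi(\id)\|$. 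The set of completely positive maps $\mathcal{A}\to B(\mathcal{H})$ with $\psi(\id)\leq\|\varphi(\id)\|\,\id$ is compact in the point--weak-$*$ (BW) topology, and a point--weak-$*$ limit of completely positive maps is again completely positive; so I can pass to a subnet $\psi_{\beta}\to\psi$ converging there. For $x\in\mathcal{E}$ one has $\psi_{\beta}(x)=P_{\beta}\varphi(x)P_{\beta}\to\varphi(x)$ strongly, hence in the weak-$*$ topology, so $\psi|_{\mathcal{E}}=\varphi$, and $\psi$ is the required completely positive extension.

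\textbf{Where the work is.} I expect the main obstacle to be the passage to a general Hilbert space: one needs that uniformly bounded sets of completely positive maps into $B(\mathcal{H})$ are compact in the point--weak-$*$ topology and that complete positivity survives such limits. The only other non-formal ingredients are the dictionary between completely positive $M_{n}$-valued maps and positive functionals on $M_{n}(\mathcal{E})$ and the Hahn--Banach extension of positive functionals on operator systems; both are available in \cite{MR1976867, MR253059}.
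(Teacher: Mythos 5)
The paper does not prove this statement: Arveson's extension theorem is quoted in Appendix~\ref{apd_os} as a classical result, with references to Arveson's original paper and to Paulsen's book, so there is no in-paper argument to compare against. Your sketch is the standard proof (essentially Chapters~6 and~7 of \cite{MR1976867}): the dictionary $\varphi\leftrightarrow s_\varphi$ between completely positive maps into $M_n(\mathbb{C})$ and positive functionals on $M_n(\mathcal{E})$, the Krein/Hahn--Banach extension of positive functionals on a unital operator system, and then the passage to general $\mathcal{H}$ by compressing with finite-rank projections and extracting a point--weak-$*$ (BW) limit, using that bounded sets of completely positive maps are BW-compact and that complete positivity survives such limits. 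All the steps you name are the right ones and the argument is sound. One minor quibble: the opening reduction ``after rescaling, that $\varphi$ is unital'' is neither always possible (if $\varphi(\id)$ is not invertible you cannot conjugate by $\varphi(\id)^{-1/2}$) nor needed --- nothing in your two steps uses $\varphi(\id)=\id$; the finite-dimensional step works for arbitrary positive functionals and the uniform bound $\|\psi_\alpha\|_{cb}=\|\psi_\alpha(\id)\|\leq\|\varphi(\id)\|$ only uses unitality of the domain. I would simply drop that sentence.
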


According to Arveson's extension theorem we can always extend a map $\varphi\in \ucp(\mathcal{E})$ to a~map $\psi\in \ucp(B(\mathcal{H}))$.
In addition,
if $\psi$ is normal,
according to Kraus,
we can obtain a more explicit description of $\psi$.

\begin{Definition}
 We say a map $\psi\colon B(\mathcal{H})\to B(\mathcal{H})$ is normal if $\psi$ is ultraweakly continuous. Equivalently, for any trace class operator $T\in B_1(\mathcal{H})$,
take a sequence or more generally a~net $\{x_i\}_{i\in I}\subset B(\mathcal{H})$ and an $x\in B(\mathcal{H})$, if $\Tr(T\,x_i)\to \Tr(T\,x)$ then we have $\Tr(T\, \psi(x_i))\to \Tr(T\, \psi(x))$.
\end{Definition}

\begin{Theorem}[{\cite[Theorem 3.3]{MR292434}}]\label{thm_kraus}
Any linear mapping $T$ of $B(\mathcal{H})$ into itself with $\|TB\|\leq \|B\|$,
which is completely positive and ultraweakly continuous,
is of the form
\[
TB=\sum\limits_{k\in K} A_k^*BA_k \qquad \text{with} \quad
\sum\limits_{k\in K} A_k^* A_k \leq 1.
\]
\end{Theorem}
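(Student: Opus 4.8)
The plan is to obtain the Kraus decomposition from the Stinespring dilation of $T$, using the ultraweak continuity hypothesis to pin down the structure of the dilating representation. Since $T$ is completely positive, Stinespring's theorem \cite{MR1976867} produces a Hilbert space $\mathcal{K}$, a unital $*$-representation $\pi\colon B(\mathcal{H})\to B(\mathcal{K})$, and a bounded operator $V\colon \mathcal{H}\to\mathcal{K}$ with
\[
TB=V^*\pi(B)V,\qquad B\in B(\mathcal{H}),
\]
and I would take the dilation to be minimal, i.e.\ $\mathcal{K}=\overline{\spn}\{\pi(B)V\xi\mid B\in B(\mathcal{H}),\ \xi\in\mathcal{H}\}$. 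The contraction hypothesis $\|TB\|\le\|B\|$ forces $\|T(\id)\|\le 1$; since $T(\id)=V^*\pi(\id)V=V^*V\ge 0$, this already yields $V^*V\le\id$, hence $\|V\|\le 1$.

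First I would show that minimality together with ultraweak continuity of $T$ forces $\pi$ to be normal. On the dense spanning set of vectors $\pi(C)V\xi$ one computes
\[
\langle \pi(B)\pi(C)V\xi,\pi(D)V\eta\rangle
=\langle V^*\pi(D^*BC)V\xi,\eta\rangle
=\langle T(D^*BC)\xi,\eta\rangle .
\]
Because left and right multiplication by the fixed operators $D^*$ and $C$ is separately ultraweakly continuous on bounded sets, any bounded net $B_i\to B$ ultraweakly gives $D^*B_iC\to D^*BC$ ultraweakly, and then normality of $T$ gives $T(D^*B_iC)\to T(D^*BC)$ ultraweakly, in particular weakly. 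Since $\{\pi(B_i)\}$ is uniformly bounded and the vectors $\pi(C)V\xi$ span a dense subspace, this shows $\pi(B_i)\to\pi(B)$ in the weak, hence ultraweak, operator topology, so $\pi$ is normal.

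Next I would invoke the classification of normal representations of the type~I factor $B(\mathcal{H})$: every normal unital $*$-representation is a spatial amplification, so there is a Hilbert space $\mathcal{L}$ and a unitary identifying $\mathcal{K}$ with $\mathcal{H}\otimes\mathcal{L}$ under which $\pi(B)=B\otimes\id_{\mathcal{L}}$. Concretely this is built by letting $p\in B(\mathcal{H})$ be a rank-one projection, setting $\mathcal{L}=\pi(p)\mathcal{K}$, and using the rank-one operators of $B(\mathcal{H})$ as matrix units to splice the subspaces $\pi(\cdot)\mathcal{L}$ together. Fixing an orthonormal basis $\{e_k\}_{k\in K}$ of $\mathcal{L}$ and writing $V\xi=\sum_k(A_k\xi)\otimes e_k$ defines bounded operators $A_k\in B(\mathcal{H})$, and a short calculation gives $V^*(\zeta\otimes e_j)=A_j^*\zeta$.

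Finally, substituting the amplified form of $\pi$ and the decomposition of $V$ into the Stinespring identity yields
\[
TB=V^*(B\otimes\id_{\mathcal{L}})V=\sum_{k\in K}A_k^*BA_k,
\]
and taking $B=\id$ gives $\sum_{k\in K}A_k^*A_k=V^*V\le\id$, as required. The main obstacle is the pair of middle steps: transferring normality from $T$ to the Stinespring representation $\pi$, and then exploiting the type~I structure of $B(\mathcal{H})$ to realise $\pi$ as an amplification $B\mapsto B\otimes\id_{\mathcal{L}}$. Without ultraweak continuity the representation $\pi$ could fail to be spatial, and the Kraus form would be unavailable; it is precisely normality that excludes such pathologies and delivers the operators $A_k$.
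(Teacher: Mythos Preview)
The paper does not supply its own proof of this theorem; it is merely quoted in the appendix as a citation of Kraus's original result. There is therefore nothing in the paper to compare your argument against.

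That said, your proposal is correct and is essentially the standard modern proof of Kraus's theorem: Stinespring dilation, transfer of ultraweak continuity from $T$ to the minimal dilating representation $\pi$, the structure theorem for normal representations of the type~I factor $B(\mathcal{H})$ as amplifications $B\mapsto B\otimes\id_{\mathcal{L}}$, and the resulting slicing of $V$ along an orthonormal basis of $\mathcal{L}$ to produce the Kraus operators $A_k$. The only point worth making explicit in a final write-up is the sense in which $\sum_{k}A_k^*BA_k$ converges for infinite $K$: it is ultraweak (equivalently strong, since the partial sums are bounded), which matches the remark the paper makes immediately after stating the theorem. Kraus's own 1971 argument proceeds somewhat differently, working directly with trace-class duality rather than invoking Stinespring, but the route you outline is the one found in most contemporary operator-algebra references and is entirely sound.
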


\begin{Theorem}[{\cite[Theorem 4.1]{MR292434}}]\label{thm_kraus_2}
Any completely positive ultraweakly continuous linear mapping $T$ of a von Neumann algebra $\mathfrak{U}$ into itself with $\|TB\| \leq \|B\|$ is of the form
\[
TB = \sum\limits_{k\in K} A_k ^*BA_k\qquad \text{with}\quad \sum\limits_{k\in K} A_k^* A_k \leq 1.
\]
\end{Theorem}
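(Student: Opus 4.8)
The plan is to reduce Theorem~\ref{thm_kraus_2} to Stinespring's dilation theorem together with the structure theory of normal representations of a von Neumann algebra; this is the natural route, and it specializes Theorem~\ref{thm_kraus} from $B(\mathcal{H})$ to a general $\mathfrak{U}$. First I would apply Stinespring dilation to the completely positive map $T$: there exist a Hilbert space $\mathcal{K}$, a nondegenerate $*$-representation $\pi\colon \mathfrak{U}\to B(\mathcal{K})$, and a bounded operator $V\colon \mathcal{H}\to \mathcal{K}$ with $T(B)=V^*\pi(B)V$ for all $B\in \mathfrak{U}$, where $\mathcal{K}=\overline{\pi(\mathfrak{U})V\mathcal{H}}$ is the minimal dilation space. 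The hypothesis $\|TB\|\leq\|B\|$ forces $\|T\|=\|T(\id)\|\leq 1$, and since $\pi$ is unital this gives $V^*V=T(\id)\leq\id$, so $\|V\|\leq 1$.

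The second step is to upgrade $\pi$ to a \emph{normal} representation, which is where minimality is used. It suffices to test ultraweak continuity on the generating vectors $\pi(C)V\xi$: using $\langle \pi(A)V\xi, V\eta\rangle=\langle T(A)\xi,\eta\rangle$ one computes $\langle \pi(B)\pi(C)V\xi, \pi(D)V\eta\rangle=\langle T(D^*BC)\xi,\eta\rangle$. If $B_i\to B$ ultraweakly in $\mathfrak{U}$, then $D^*B_iC\to D^*BC$ ultraweakly, so normality of $T$ makes all these inner products converge. As the net $\{\pi(B_i)\}$ is bounded and the generating vectors are total in $\mathcal{K}$, we conclude $\pi(B_i)\to\pi(B)$ weakly, i.e.\ $\pi$ is normal.

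Third, I would invoke the structure theorem for normal representations of a von Neumann algebra: a normal representation $\pi$ of $\mathfrak{U}\subset B(\mathcal{H})$ is unitarily equivalent to a subrepresentation of an amplification $B\mapsto B\otimes\id_{\mathcal{L}}$ of the identity representation. Concretely there are a multiplicity Hilbert space $\mathcal{L}$ and an isometry $W\colon \mathcal{K}\to \mathcal{H}\otimes\mathcal{L}$ with $W\pi(B)=(B\otimes \id_{\mathcal{L}})W$ for all $B\in\mathfrak{U}$. Setting $U=WV\colon \mathcal{H}\to\mathcal{H}\otimes\mathcal{L}$ yields $T(B)=U^*(B\otimes\id_{\mathcal{L}})U$ with $U^*U=V^*V\leq\id$. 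Finally, fixing an orthonormal basis $\{e_k\}_{k\in K}$ of $\mathcal{L}$ and writing $U\xi=\sum_k (A_k\xi)\otimes e_k$ for bounded operators $A_k\in B(\mathcal{H})$, one gets $U^*(\eta\otimes e_k)=A_k^*\eta$, whence $T(B)=U^*(B\otimes\id_{\mathcal{L}})U=\sum_k A_k^*BA_k$, while $U^*U=\sum_k A_k^*A_k\leq\id$ in the strong sense. This is precisely the asserted form.

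I expect the main obstacle to be the third step, namely the normal-representation structure theorem realizing $\pi$ inside an amplification of the identity representation; for a general von Neumann algebra this rests on comparing central supports, and in general the isometry lands in $P(\mathcal{H}\otimes\mathcal{L})$ for some projection $P\in\mathfrak{U}'\,\overline{\otimes}\,B(\mathcal{L})$. Since $P$ commutes with each $B\otimes\id_{\mathcal{L}}$ and $W$ has range inside $P(\mathcal{H}\otimes\mathcal{L})$ (so $PW=W$), the intertwining identity $W\pi(B)=(B\otimes\id_{\mathcal{L}})W$ is unaffected and the remaining computation goes through verbatim. The earlier steps are routine once normality of $\pi$ is in hand, and the only place where one passes from $B(\mathcal{H})$ to an arbitrary $\mathfrak{U}$ is exactly this appeal to the comparison of the normal representation with the identity representation.
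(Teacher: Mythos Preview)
The paper does not actually prove this statement: Theorem~\ref{thm_kraus_2} is quoted in the appendix as a result from Kraus \cite[Theorem~4.1]{MR292434} without argument, so there is no proof in the paper to compare your proposal against.

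That said, your outline is the standard and correct route to this result. The three steps (Stinespring dilation, upgrading $\pi$ to a normal representation via minimality and normality of $T$, and then invoking the structure theorem that every normal representation of $\mathfrak{U}\subset B(\mathcal{H})$ embeds in an amplification of the identity representation) are exactly how one proves this; your handling of the projection $P\in\mathfrak{U}'\,\overline{\otimes}\,B(\mathcal{L})$ in the third step is also right, since $PW=W$ makes $P$ invisible in the final formula. One small point worth making explicit: the identity representation of $\mathfrak{U}$ on $\mathcal{H}$ is faithful (we are assuming $\mathfrak{U}$ is concretely given with unit), so its central support is $1$, and hence \emph{every} normal representation of $\mathfrak{U}$ is dominated by a suitable amplification of it; this is what guarantees the intertwining isometry $W$ exists. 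With that clarified, the argument is complete.
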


\begin{Remark}
In Theorems \ref{thm_kraus} and~\ref{thm_kraus_2} above,
the sum is in the sense of ultraweakly convergence for infinite $K$.
\end{Remark}

\section{Haagerup tensor product}\label{apd_haag}
In this appendix, we review some fundamental results about Haagerup tensor product of operator systems;
we refer to \cite{MR1793753, MR1976867, MR2006539} for more details.

Let $\mathcal{H}$ be a Hilbert space,
$B(\mathcal{H})$ the set of bounded operators over $\mathcal{H}$,
and let $\mathcal{E}$, $\mathcal{F}\subset B(\mathcal{H})$ be two operator systems.
We denote by $\mathcal{E}\otimes \mathcal{F}$ the space of algebraic tensor product,
i.e.,
\[
\mathcal{E}\otimes \mathcal{F}
=
\Bigg\{\sum_{i=1}^k a_i\otimes b_i\mid a_i\in \mathcal{E},\, b_i\in \mathcal{F},\, k\in \mathbb{N}\Bigg\}.
\]
We define the Haagerup tensor norm $\|x\|_h$ of $x\in \mathcal{E}\otimes \mathcal{F}$ as
\[
\|x\|_h:=\inf\Big\{\Big\|\sum a_ia_i^*\Big\|^{1/2}\Big\|\sum b_i^*b_i\Big\|^{1/2}\Big\},
\]
here the infimum runs over all the expressions of $x=\sum a_i\otimes b_i$.

\begin{Definition}
We denote by $\mathcal{E}\otimes_h\mathcal{F}$ the completion of $\mathcal{E}\otimes\mathcal{F}$ with respect to the Haagerup tensor norm $\|\cdot\|_h$.
\end{Definition}

\begin{Theorem}[{\cite[Theorem 17.4]{MR1976867}}]
Let $\mathcal{E}\subset\mathcal{E}_1$ and $\mathcal{F}\subset\mathcal{F}_1$ be operator systems.
Then the inclusion of $\mathcal{E}\otimes_h\mathcal{F}$ into $\mathcal{E}_1\otimes_h\mathcal{F}_1$ is a complete isometry.
\end{Theorem}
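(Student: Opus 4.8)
The plan is to reduce the statement, at every matrix level, to the \emph{functorial} description of the Haagerup norm and then to invoke a Hahn--Banach-type extension for completely bounded maps. On the algebraic tensor product the inclusion is just the identity of $\mathcal{E}\otimes\mathcal{F}$ regarded inside $\mathcal{E}_1\otimes\mathcal{F}_1$; since the infimum defining $\|u\|_h$ in $\mathcal{E}_1\otimes_h\mathcal{F}_1$ (and likewise the infimum defining the norm of any amplification $u\in M_k(\mathcal{E}\otimes\mathcal{F})$) ranges over the strictly larger family of factorizations $u=\sum a_i\otimes b_i$ with $a_i\in\mathcal{E}_1$, $b_i\in\mathcal{F}_1$, one gets $\|u\|_{h,\mathcal{E}_1\otimes\mathcal{F}_1}\le\|u\|_{h,\mathcal{E}\otimes\mathcal{F}}$ for free. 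So the inclusion is completely contractive, and the whole content is the reverse inequality on each $M_k$.

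For the reverse inequality I would use the description of the Haagerup norm by pairs of complete contractions into a common $B(\mathcal{K})$: for $u\in M_k(\mathcal{E}\otimes\mathcal{F})$,
\[
\|u\|_h=\sup\big\{\,\big\|\Theta_{\varphi,\psi}^{(k)}(u)\big\|\,\big\},
\]
where the supremum is over all Hilbert spaces $\mathcal{K}$ and all completely contractive maps $\varphi\colon\mathcal{E}\to B(\mathcal{K})$ and $\psi\colon\mathcal{F}\to B(\mathcal{K})$, and $\Theta_{\varphi,\psi}\colon\mathcal{E}\otimes\mathcal{F}\to B(\mathcal{K})$ is the linear map determined by $a\otimes b\mapsto\varphi(a)\psi(b)$. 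The inequality ``$\ge$'' here is elementary: given $u=\sum_{i=1}^m a_i\otimes b_i$ one factors $\Theta_{\varphi,\psi}(u)=R\,C$ with $R=(\varphi(a_1),\dots,\varphi(a_m))$ a $1\times m$ operator matrix and $C=(\psi(b_1),\dots,\psi(b_m))^{\rm T}$ an $m\times 1$ one, and complete contractivity of $\varphi,\psi$ gives $\|R\|\le\|\sum a_ia_i^*\|^{1/2}$ and $\|C\|\le\|\sum b_i^*b_i\|^{1/2}$; taking the infimum over factorizations (and the same bookkeeping at level $M_k$) yields $\|\Theta_{\varphi,\psi}^{(k)}(u)\|\le\|u\|_h$. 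The inequality ``$\le$'', i.e.\ the existence of an almost-norming pair $(\varphi,\psi)$ on a single Hilbert space, is the genuinely hard input: it is the Christensen--Sinclair representation theorem for completely bounded bilinear maps (equivalently, the identification of the dual of $\mathcal{E}\otimes_h\mathcal{F}$ with the completely bounded bilinear forms on $\mathcal{E}\times\mathcal{F}$), and it is precisely the machinery built up in \cite[Chapter~17]{MR1976867} before this theorem.

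Granting that description, injectivity falls out by extension and restriction. Fix $u\in M_k(\mathcal{E}\otimes\mathcal{F})$ and $\epsilon>0$, and pick completely contractive $\varphi\colon\mathcal{E}\to B(\mathcal{K})$, $\psi\colon\mathcal{F}\to B(\mathcal{K})$ with $\|\Theta_{\varphi,\psi}^{(k)}(u)\|>\|u\|_{h,\mathcal{E}\otimes\mathcal{F}}-\epsilon$. By the Arveson--Wittstock extension theorem for completely bounded maps with values in $B(\mathcal{K})$, extend $\varphi$ to a complete contraction $\varphi_1\colon\mathcal{E}_1\to B(\mathcal{K})$ and $\psi$ to a complete contraction $\psi_1\colon\mathcal{F}_1\to B(\mathcal{K})$. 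Because $u$ lies in $M_k(\mathcal{E}\otimes\mathcal{F})$ and $\varphi_1,\psi_1$ restrict to $\varphi,\psi$, we have $\Theta_{\varphi_1,\psi_1}^{(k)}(u)=\Theta_{\varphi,\psi}^{(k)}(u)$; and the elementary ``$\ge$'' direction of the description, applied in $\mathcal{E}_1\otimes_h\mathcal{F}_1$, gives $\|\Theta_{\varphi_1,\psi_1}^{(k)}(u)\|\le\|u\|_{h,\mathcal{E}_1\otimes\mathcal{F}_1}$. Hence $\|u\|_{h,\mathcal{E}\otimes\mathcal{F}}-\epsilon<\|u\|_{h,\mathcal{E}_1\otimes\mathcal{F}_1}$; letting $\epsilon\to0$ and combining with the contractivity noted in the first step, the inclusion $M_k(\mathcal{E}\otimes_h\mathcal{F})\to M_k(\mathcal{E}_1\otimes_h\mathcal{F}_1)$ is isometric for every $k$, so the inclusion of algebraic tensor products is completely isometric, and it extends to a complete isometry of the completions.

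The main obstacle is thus isolated in the second paragraph: producing a concrete almost-norming pair of complete contractions into one Hilbert space (the Christensen--Sinclair / bilinear-form-duality input); once that is in hand everything else is a routine extend-restrict-and-take-limits argument. An alternative route would be to realize $\mathcal{E}_1\otimes_h\mathcal{F}_1$ completely isometrically as operators in ``free position'' on a single Hilbert space, so that the Haagerup norm becomes an honest operator norm $\|\sum a_i b_i\|$ that visibly does not see whether the factors are taken in $\mathcal{E},\mathcal{F}$ or in $\mathcal{E}_1,\mathcal{F}_1$; but that realization rests on the same representation theory, so the extension argument above is the most economical.
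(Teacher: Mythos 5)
This statement appears in the paper only as a quoted result (Paulsen, Theorem~17.4) in Appendix~\ref{apd_haag}; the paper gives no proof of it, so there is nothing internal to compare against. Your argument is correct and is in substance the standard proof of injectivity of the Haagerup tensor product, which is also how the cited reference proves it: the easy complete contractivity from the larger infimum, the characterization of $\|\cdot\|_h$ as a supremum over products of pairs of complete contractions (the Christensen--Sinclair/Paulsen--Smith representation of completely bounded bilinear maps, which you correctly isolate as the nontrivial input), and then the Arveson--Wittstock extension of each complete contraction from $\mathcal{E}$, $\mathcal{F}$ to $\mathcal{E}_1$, $\mathcal{F}_1$ followed by restriction. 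The only point where a careful write-up needs slightly more fuss than you give is the norming formula itself: the pair produced by the representation theorem naturally takes values in $B(\mathcal{K},\mathcal{H})$ and $B(\mathcal{H},\mathcal{K})$ rather than a single $B(\mathcal{K})$, but passing to $B(\mathcal{H}\oplus\mathcal{K})$ via corner embeddings fixes this without changing any norms, and the matrix-level bookkeeping you allude to (rows and columns over $M_{k,m}$ and $M_{m,k}$) goes through verbatim. Granting the two named black boxes, the extend-and-restrict step and the passage to completions are as routine as you claim.
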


\begin{Theorem}[{\cite[Theorem 5.12]{MR2006539}}]
Let $\mathcal{A}\subset B(\mathcal{H})$ and $\mathcal{B}\subset B(\mathcal{K})$ be $C^*$-algebras.
We have a~natural completely isometric embedding
\[
J\colon\ \mathcal{A}\otimes_h\mathcal{B}\to\cb(B(\mathcal{K}, \mathcal{H}))
\]
defined by
\[
J(a\otimes b)\colon\ T\to aTb,
\]
here $\cb\left(B(\mathcal{K}, \mathcal{H})\right)$ denotes the collection of all the completely bounded maps over $B(\mathcal{K}, \mathcal{H})$.
\end{Theorem}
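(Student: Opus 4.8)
The plan is to establish the two inequalities $\|J(u)\|_{cb}\le\|u\|_h$ and $\|u\|_h\le\|J(u)\|_{cb}$, together with their matricial analogues, so that $J$ is not merely isometric but completely isometric; the first of these is a direct factorization estimate (which also shows that $J$ is well defined on the completion $\mathcal{A}\otimes_h\mathcal{B}$), while the second carries all of the difficulty.

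For the completely contractive direction I would use the standard row--column factorization. Given $u=\sum_{i=1}^{n}a_i\otimes b_i$ in the algebraic tensor product, assemble the row $R=(a_1,\dots,a_n)\in M_{1,n}(\mathcal{A})$ and the column $C=(b_1,\dots,b_n)^{\rm T}\in M_{n,1}(\mathcal{B})$; then for $T\in B(\mathcal{K},\mathcal{H})$ one has $J(u)(T)=\sum_i a_iTb_i=R\,T^{(n)}\,C$, where $T^{(n)}=\operatorname{diag}(T,\dots,T)\in B(\mathcal{K}^{(n)},\mathcal{H}^{(n)})$, so that $\|J(u)(T)\|\le\|R\|\,\|T\|\,\|C\|=\big\|\sum a_ia_i^*\big\|^{1/2}\,\|T\|\,\big\|\sum b_i^*b_i\big\|^{1/2}$. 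The identical computation applied to an amplification $[T_{kl}]\in M_p(B(\mathcal{K},\mathcal{H}))$ — the $a_i$ again multiplying from the $\mathcal{H}$-side of every entry — shows $\|J(u)\|_{cb}\le\|R\|\,\|C\|$, and the analogous matricial factorization $U=\alpha\odot\beta$ with $\alpha\in M_{m,N}(\mathcal{A})$, $\beta\in M_{N,m}(\mathcal{B})$ handles $U\in M_m(\mathcal{A}\otimes\mathcal{B})$; taking the infimum over all such representations gives $\|J\|_{cb}\le 1$. In particular $J$ is continuous on the algebraic tensor product and extends uniquely to a complete contraction on $\mathcal{A}\otimes_h\mathcal{B}$.

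The reverse inequality $\|u\|_h\le\|J(u)\|_{cb}$, and its matricial form $\|U\|_h\le\|J_m(U)\|_{cb}$, is the heart of the matter and the step I expect to be the main obstacle: it is not a soft estimate but asserts that an optimal row--column decomposition of $u$ can always be reconstructed from the completely bounded map $J(u)$. I would first reduce to full operator algebras: since $\mathcal{A}\subseteq B(\mathcal{H})$ and $\mathcal{B}\subseteq B(\mathcal{K})$, the injectivity of the Haagerup tensor product (\cite[Theorem~17.4]{MR1976867}) makes $\mathcal{A}\otimes_h\mathcal{B}$ sit completely isometrically inside $B(\mathcal{H})\otimes_h B(\mathcal{K})$, and $J$ is the restriction of the corresponding map $J'$ on the larger tensor product, so it suffices to prove the inequality for $J'$. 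Then I would invoke the representation theory of completely bounded maps: extend $J'(u)$ to a completely bounded map on $B(\mathcal{H}\oplus\mathcal{K})$ of the same completely bounded norm (Wittstock's extension theorem, using that $B(\mathcal{K},\mathcal{H})$ is a corner of $B(\mathcal{H}\oplus\mathcal{K})$), apply the Wittstock--Paulsen (Stinespring-type) representation of a completely bounded map on a $C^*$-algebra, and read off from its pieces operators $c_j\in B(\mathcal{H})$ and $d_j\in B(\mathcal{K})$ with $u=\sum_j c_j\otimes d_j$ and $\big\|\sum c_jc_j^*\big\|^{1/2}\big\|\sum d_j^*d_j\big\|^{1/2}\le\|J'(u)\|_{cb}+\varepsilon$; dually, one may run the Christensen--Sinclair representation of completely bounded bilinear forms together with the identification of $(B(\mathcal{H})\otimes_h B(\mathcal{K}))^{*}$ with such forms. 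The delicate point is controlling the auxiliary $*$-representations that appear, so that passing through them never inflates the completely bounded norm beyond $\|J'(u)\|_{cb}$; when $\mathcal{H}$ and $\mathcal{K}$ are separable this can instead be circumvented by compressing the $a_i$ and $b_i$ to finite-dimensional corners, using the purely finite-dimensional identity $M_n\otimes_h M_m\cong\cb(B(\mathbb{C}^m,\mathbb{C}^n))$, and passing to a limit. Once the reverse inequality holds at every matrix level, density of the algebraic tensor product in $\mathcal{A}\otimes_h\mathcal{B}$ completes the proof.
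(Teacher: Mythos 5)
First, note that the paper does not prove this statement at all: it is quoted verbatim in Appendix~\ref{apd_haag} from Pisier's book \cite[Theorem~5.12]{MR2006539} and used as a black box, so there is no in-paper argument to compare yours against; I can only judge the proposal on its own terms. Your contractive half is correct and complete: writing $J(u)(T)=R\,(I_n\otimes T)\,C$ with $R=(a_1,\dots,a_n)$, $C=(b_1,\dots,b_n)^{\rm T}$, estimating $\|R\|=\big\|\sum a_ia_i^*\big\|^{1/2}$ and $\|C\|=\big\|\sum b_i^*b_i\big\|^{1/2}$, doing the same at every matrix level, and taking the infimum over representations does give $\|J\|_{cb}\le 1$ and the extension to the completion.

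The reverse inequality, which you correctly identify as the substance of the theorem, is however left as a plan containing a genuine gap, and the specific mechanism you propose would not go through as stated. A Wittstock--Paulsen representation of (an extension of) $J(u)$ has the form $x\mapsto V^*\pi(x)W$ with $\pi$ a $*$-representation of $B(\mathcal{H}\oplus\mathcal{K})$ on an auxiliary Hilbert space; when $\mathcal{H}$ is infinite-dimensional $\pi$ need not be normal, and even when it is, what you obtain is a factorization of the \emph{map} $J(u)$ through the auxiliary space, not a new decomposition $u=\sum_j c_j\otimes d_j$ of the \emph{element} $u$ with $c_j\in B(\mathcal{H})$, $d_j\in B(\mathcal{K})$ and $\big\|\sum c_jc_j^*\big\|^{1/2}\big\|\sum d_j^*d_j\big\|^{1/2}\le\|J(u)\|_{cb}+\varepsilon$. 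Producing such a decomposition from the map alone is precisely what the theorem asserts, so at this point you have restated the claim rather than proved it; the same remark applies to the duality route you mention, since the Christensen--Sinclair representation of a completely bounded bilinear form again involves auxiliary representations of $B(\mathcal{H})$ and $B(\mathcal{K})$ that must be related back to the identity representations. Your finite-dimensional fallback inherits both problems: the identity $M_n\otimes_h M_m\cong\cb\big(B\big(\mathbb{C}^m,\mathbb{C}^n\big)\big)$ is itself an instance of the theorem and is asserted rather than proved, and the limiting step requires $\|PuP\|_h\to\|u\|_h$ along the net of finite-rank projections, which does not follow from what you have written because the compressions $Pa_iP$ converge to $a_i$ only strongly, not in norm, so lower semicontinuity of $\|\cdot\|_h$ along this net needs its own argument. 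Until the isometric inequality is actually established at every matrix level, the proof is incomplete, though the overall architecture (easy direction by factorization, reduction to $B(\mathcal{H})\otimes_h B(\mathcal{K})$ by injectivity, hard direction by completely bounded representation theory or duality) is the right one.
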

According to \cite{MR891774} the Haagerup tensor norm is a $C^*$-cross norm:
\begin{Theorem}[{\cite[Corollary 2.2]{MR891774}}]
Suppose $A$ and $B$ are $C^*$-algebras.
For any $a\in A$, $b\in B$, $\|a\otimes b\|_h=\|a\|\,\|b\|$.
\end{Theorem}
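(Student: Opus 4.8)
The plan is to prove the two inequalities $\|a\otimes b\|_h\leq \|a\|\,\|b\|$ and $\|a\otimes b\|_h\geq \|a\|\,\|b\|$ separately. The upper bound is immediate from the very definition of the Haagerup norm: feeding in the single-term representation $a\otimes b=a\otimes b$ gives
\[
\|a\otimes b\|_h\leq \|aa^*\|^{1/2}\,\|b^*b\|^{1/2}=\|a\|\,\|b\|,
\]
where the last equality is just the $C^*$-identity $\|aa^*\|=\|a\|^2$ and $\|b^*b\|=\|b\|^2$. All of the content therefore sits in the reverse inequality, so the rest of the argument is devoted to the lower bound.

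For the lower bound I would pass to the description of the Haagerup norm by two-sided multiplication. The quantities $\|\sum a_ia_i^*\|$ and $\|\sum b_i^*b_i\|$ entering the definition of $\|\cdot\|_h$ are intrinsic $C^*$-norms, so $\|a\otimes b\|_h$ does not depend on any ambient representation (this is also the content of \cite[Theorem 17.4]{MR1976867}); hence I may fix faithful representations $A\subset B(\mathcal{H})$ and $B\subset B(\mathcal{K})$. Then \cite[Theorem 5.12]{MR2006539} supplies the completely isometric embedding $J\colon A\otimes_h B\to \cb(B(\mathcal{K},\mathcal{H}))$ with $J(a\otimes b)\colon T\mapsto aTb$, so in particular
\[
\|a\otimes b\|_h=\|J(a\otimes b)\|_{cb}\geq \|J(a\otimes b)\|=\sup_{\|T\|\leq 1}\|aTb\|.
\]
It therefore suffices to produce operators $T\in B(\mathcal{K},\mathcal{H})$ of norm one with $\|aTb\|$ arbitrarily close to $\|a\|\,\|b\|$.

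The key step is a computation with rank-one operators. Fix unit vectors $\eta\in\mathcal{H}$ and $\zeta\in\mathcal{K}$ and let $T\in B(\mathcal{K},\mathcal{H})$ be given by $T\xi=\langle \xi,\zeta\rangle\,\eta$, so that $\|T\|=1$. A one-line calculation gives $(aTb)\xi=\langle \xi,b^*\zeta\rangle\,a\eta$, so $aTb$ is again rank one with $\|aTb\|=\|a\eta\|\,\|b^*\zeta\|$. Given $\epsilon>0$, I would choose $\eta$ with $\|a\eta\|>\|a\|-\epsilon$ (possible since $\|a\|=\sup_{\|\eta\|=1}\|a\eta\|$) and $\zeta$ with $\|b^*\zeta\|>\|b^*\|-\epsilon=\|b\|-\epsilon$; then $\|aTb\|>(\|a\|-\epsilon)(\|b\|-\epsilon)$. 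Letting $\epsilon\to 0$ yields $\sup_{\|T\|\leq 1}\|aTb\|\geq \|a\|\,\|b\|$, which combined with the previous display gives $\|a\otimes b\|_h\geq \|a\|\,\|b\|$, and together with the upper bound proves the claim. The only real obstacle is the lower bound itself; once one routes it through the completely isometric embedding of \cite[Theorem 5.12]{MR2006539}, the rank-one test operators make it essentially immediate. If one instead wanted a proof from the raw definition avoiding that embedding, the genuine difficulty would shift to constructing, for every representation $a\otimes b=\sum a_i\otimes b_i$, functionals detecting $\|a\|\,\|b\|$, which is exactly the work that the embedding theorem packages away.
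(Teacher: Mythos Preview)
Your argument is correct: the upper bound follows straight from the definition via the one-term representation, and the lower bound is cleanly obtained by invoking the completely isometric embedding of \cite[Theorem~5.12]{MR2006539} and testing on rank-one operators $T\xi=\langle\xi,\zeta\rangle\eta$.

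However, there is nothing to compare against: the paper does not prove this theorem at all. It is stated in the appendix purely as a quoted background result, with the proof deferred to the original reference \cite[Corollary~2.2]{MR891774}. So your write-up is not an alternative to the paper's proof but rather a self-contained substitute for the citation. One remark worth making is that you are deriving an elementary fact (the cross-norm property) from a substantially deeper one (the complete isometry of $J$); the original Effros--Kishimoto argument is more direct, working essentially at the level of states and the definition, without needing the full embedding theorem. Your route is perfectly valid and short, but if you want an argument that stands independently of \cite[Theorem~5.12]{MR2006539}, you would replace the embedding step by choosing states $\varphi$ on $A$ and $\psi$ on $B$ with $\varphi(aa^*)$ and $\psi(b^*b)$ close to $\|a\|^2$ and $\|b\|^2$, and then bounding $\|\sum a_ia_i^*\|^{1/2}\|\sum b_i^*b_i\|^{1/2}$ from below via Cauchy--Schwarz applied to $\sum\varphi(aa_i^*)\psi(b_i^*b)$ for an arbitrary representation $a\otimes b=\sum a_i\otimes b_i$.
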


\subsection*{Acknowledgements}

The author wishes to express his gratitude to Walter van Suijlekom from Radboud University Nijmegen for stimulating discussions on related topics.
Besides that,
the author also would like to thank all the anonymous referees for their significant suggestions which improved the quality of this paper a lot.

\pdfbookmark[1]{References}{ref}
\LastPageEnding

\end{document}